\documentclass[10pt,psamsfonts]{amsart}

\usepackage{amsmath,amsthm}     
\usepackage{graphicx}     
\usepackage{hyperref} 
\usepackage{url}
\usepackage{amsfonts} 
\usepackage{url}
\usepackage{chngcntr}
 \usepackage{float}
\usepackage{multicol}
\usepackage{enumerate}
\usepackage{mathtools}
\usepackage{pgf,tikz,pgfplots}
\usepackage{xcolor}
\usetikzlibrary{calc}

\usepackage{soul}

\theoremstyle{theorem}
\newtheorem{theorem}{Theorem}

\newtheorem{lemma}{Lemma}
\newtheorem{proposition}{Proposition}
\newtheorem{corollary}{Corollary}

\theoremstyle{definition}
\newtheorem*{definition}{Definition}

\theoremstyle{remark}

\theoremstyle{example}
\newtheorem{example}{Example}

\allowdisplaybreaks

\makeatletter
\@addtoreset{footnote}{page}
\makeatother

\author{Christian Aebi and Grant Cairns}

\address{Coll\`ege Calvin, Geneva, Switzerland 1211}
\email{christian.aebi@edu.ge.ch}
\address{Department of Mathematical and Physical Sciences, La Trobe University, Melbourne, Australia 3086}
\email{G.Cairns@latrobe.edu.au}

\def\N{\mathbb N}
\def\Z{\mathbb Z}
\def\R{\mathbb R}

\title{Lattice triangles whose centers are lattice points}

\begin{document}

\maketitle

\begin{abstract}
We show that for an integer $\ell$, there exists an acute integer lattice triangle of lattice perimeter $\ell$ such that its orthocenter is an integer lattice point, if and only if $\ell=6 $ or $\ell\ge 8$.
Analogous results are obtained for the circumcenter and the centroid, and the results are  contrasted with those for obtuse and right triangles. 
\end{abstract}

\vspace{10px}

\noindent{\textbf{Keywords:} {lattice triangle, circumcenter, centroid, orthocenter}} 

\vspace{10px}

\section{Introduction.}\label{S:intro}

For convenience, the elements of the integer lattice $\Z^2\subseteq \R^2$ will simply be called \emph{lattice points}. The \emph{lattice length} of a line segment $\gamma$ joining two lattice points is defined to be the number of lattice points on $\gamma$ minus one \cite{Rab,HSk,WeiDing}. 
A \emph{lattice polygon} is a planar polygon whose vertices are lattice points. The \emph{lattice perimeter} of a lattice polygon is the sum of the lattice lengths of its sides \cite{Worm,HaaseNil}; so the lattice perimeter is just the number of lattice points on the boundary. Lattice perimeter plays a key role in Pick's famous theorem, which gives the area of a lattice polygon in terms of its lattice perimeter and the number, called the genus, of lattice points in the interior of the polygon \cite{Scott}.

This paper concerns the three classical centers of a triangle: the circumcenter, the centroid, and the orthocenter, which we denote $F,G,H$, respectively. We are interested in knowing what lattice perimeters are possible when one of these centers is itself a lattice point. 

What is the motivation for this study? As authors of an earlier Monthly article wrote, ``Once you get started drawing lattice polygons
on graph paper and discovering relations between their numerical invariants, it is not
so easy to stop! (The gentle reader has been warned.)'' \cite{HaaSc}. The Monthly has a history of publishing papers on lattice polygons (see \cite{Abrams}), and particularly on the concept of lattice perimeter (see \cite{PoRV}). The particular problem investigated in this paper involves the intersection of two distinct domains. The notion of lattice perimeter lies within the field of lattice or integer geometry. The notions of circumcenter and orthocenter lie in the field of Euclidean geometry. These two fields are quite different, especially as concerns their underlying transformations groups; apart from translations, which are common to both of them, lattice geometry uses the general linear group $GL(2,\Z)$, while Euclidean geometry employs the orthogonal group $O(2,\R)$. Questions in the intersection of these two fields can reveal interesting -- and sometimes amusing -- number theoretic problems  while still retaining an elementary character. For instance, in Theorem~\ref{T:3457}, the acute lattice triangles that don't satisfy the stated condition necessarily have lattice perimeter $3,4,5$ or $7$.  It was the appearance of these curious numbers that first caught the authors' attention.

Let us first consider the orthocenter $H$. For general triangles, the hypothesis that $H$ is a lattice point has no consequence for the lattice perimeter $\ell$, as we will see in Section~\ref{S:obright}. The entertaining case is when the triangle is acute. 

\begin{theorem}\label{T:3457}
If $\ell\in\N$, then there exists an acute lattice triangle of lattice perimeter $\ell$, for which the orthocenter $H$ is a lattice point, if and only if 
$\ell=6 $ or $\ell\ge 8$.
\end{theorem}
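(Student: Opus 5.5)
We first construct examples for the permitted values of $\ell$, and then rule out $\ell\in\{3,4,5,7\}$. Since $\ell\ge 3$ for every lattice triangle, this covers all cases.

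\emph{Existence.} Translate so that one side lies on the $x$-axis, and take vertices
$$A=(0,0),\qquad B=(p+q,0),\qquad C=(p,h),\qquad p,q,h\in\N .$$
A direct computation gives $H=(p,pq/h)$. So $H$ is a lattice point exactly when $h\mid pq$. The angles at $A$ and $B$ are acute because $p,q>0$. The angle at $C$ is acute exactly when $h^2>pq$. The lattice perimeter is
$$\ell=(p+q)+\gcd(p,h)+\gcd(q,h).$$
This reduces existence to choosing arithmetic families.
\begin{itemize}
\item Taking $q=1$ and $h=p\ge 2$ gives $\ell=2p+2$, so every even $\ell\ge 6$. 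For instance $(0,0),(3,0),(2,2)$ has $H=(2,1)$ and $\ell=6$.
\item For odd $\ell$, the family $p=2m$, $q=1$, $h=m$ with $m\ge3$ gives $\ell=3m+2$. Sporadic choices also help: $(p,q,h)=(6,1,3)$ gives $\ell=11$.
\end{itemize}
I would search for a two-parameter family in which $\gcd(p,h)+\gcd(q,h)$ has parity opposite to $p+q$. One candidate is $q$ odd, $h=2q$ and $p=2r$ with $q\mid r$ and $4q>2r$. The aim is to show that every odd $\ell\ge 9$ arises, checking small cases by hand. Affine-lattice transformations do not preserve orthocenters, so the examples must be genuinely Euclidean. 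This part is bookkeeping.

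\emph{Nonexistence for $\ell=3,4,5,7$.} Place $H$ at the origin. The altitude conditions $A\cdot(B-C)=0$, and their analogues at $B$ and $C$, say that
$$A\cdot B=B\cdot C=C\cdot A=k\in\Z .$$
Since $H$ is interior to an acute triangle, $\angle AHB=\pi-\angle C>\pi/2$, so $k<0$.

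Write $g_A=\gcd(A)$. The points $B$ and $C$ both lie on the line $\{X: A\cdot X=k\}$, whose lattice points differ by multiples of $A^\perp/g_A$. Hence
$$B-C=\ell_A\,A^\perp/g_A,$$
where $\ell_A$ is the lattice length of $BC$, and similarly for the other two sides. Substituting these relations into $A+\dots$ identities should give the divisibility and parity constraints. First, $\ell_A\ell_B\ell_C$ is related to the doubled area $2\Delta=|\det(B-A,C-A)|$. Second, $2\Delta=\ell_A|A|\cdot|BC|/(g_A\ell_A)$-type expressions. Third, $|A|^2|BC|^2$ relates to $k$.

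My plan is to combine these with Pick's theorem, $2\Delta=2i+\ell-2$, and with the congruence of squared lengths modulo small primes. The goal is to show that when $\ell$ is small, each $\ell_A$ is forced into a short list, namely the partitions of $3,4,5,7$ into three positive parts. For each partition I would derive a contradiction. The most likely source is that $|A|^2/g_A^2$, $|B|^2/g_B^2$ and $|C|^2/g_C^2$ must be sums of two squares satisfying incompatible congruences mod $4$ or mod $8$, together with the sign condition $k<0$.

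This case analysis is the main obstacle. In particular I do not yet see why $\ell=6$ survives while $\ell=7$ fails. I would first compute all examples with small coordinates by computer to guess the invariant that excludes $3,4,5,7$. My expectation is a parity condition on the triple $(\ell_A,\ell_B,\ell_C)$ coupled with acuteness. I would then prove that invariant using the dot-product relations above.
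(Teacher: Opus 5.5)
Your nonexistence argument for $\ell\in\{3,4,5,7\}$ is missing. You outline a search for congruence obstructions on $|A|^2/g_A^2$ modulo $4$ or $8$, but you derive no contradiction, and you say yourself that you cannot see why $\ell=6$ survives while $\ell=7$ fails. No congruence analysis is needed: two short observations suffice, and they are Lemmas~\ref{L:gcd} and \ref{L:theta} of the paper.

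First, for \emph{any} lattice triangle, $\gcd(\ell_i,\ell_j)=\gcd(\ell_0,\ell_1,\ell_2)$ for $i\neq j$. Indeed, if $n$ divides the lattice lengths of two sides, it divides the coordinates of both edge vectors. Hence it divides the coordinates of their difference, which is the third edge vector. This excludes $(1,2,2)$, $(1,2,4)$, $(1,3,3)$ and $(2,2,3)$.

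Second, your own relation $B-C=\pm\ell_A A^\perp/g_A$ (with $H$ at the origin) gives $|BC|/|HA|=\ell_A/g_A$. In every triangle $|HA|=|BC|\cot\angle A$, because $AH=2R\cos\angle A$ and $BC=2R\sin\angle A$, with $R$ the circumradius. Hence $\tan\angle A=\ell_A/g_A\le\ell_A$. So a side of lattice length $1$ faces an angle of at most $\pi/4$, and two such sides force the third angle to be at least $\pi/2$, contradicting acuteness. This excludes $(1,1,1)$, $(1,1,2)$, $(1,1,3)$ and $(1,1,5)$.

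These eight triples are all the partitions of $3,4,5,7$ into three positive parts. By contrast, $\ell=6$ admits $(1,2,3)$, which passes both tests. That is the entire explanation of why $6$ survives and $7$ does not.

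The existence half is also unfinished for odd $\ell$, and the candidate family you propose cannot work. With $q$ odd, $h=2q$, $p=2r$, you get $\gcd(p,h)=2\gcd(r,q)$ and $\gcd(q,h)=q$, so $\ell=2r+2q+2\gcd(r,q)$ is always even. Your family $p=2m$, $q=1$, $h=m$ only reaches $\ell\equiv 5\pmod 6$.

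Your parametrization does suffice with other choices:
\begin{itemize}
\item Take $p=2$ and $q=h=s$ with $s\ge 3$ odd. This gives $H=(2,2)$, $h^2>pq$ and $\ell=2s+3$, covering all $\ell\equiv 1\pmod 4$ with $\ell\ge 9$.
\item Take $p=6$, $q=s$ odd and $h=3s$. This gives $H=(6,2)$, $h^2>pq$ and $\ell=(6+s)+3+s=2s+9$, covering all $\ell\equiv 3\pmod 4$ with $\ell\ge 11$.
\end{itemize}
These are exactly the paper's odd examples. Your even family $q=1$, $h=p\ge 2$ is correct as stated.
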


What is so special about the unusual condition ``$\ell=6 $ or $\ell\ge 8$'' in the above theorem? It does appear in the following curiosity, which is presumably ``well known''.

\begin{proposition}\label{P:3457}
If $n\in\N$, then there exist distinct, pairwise coprime, positive integers $x,y,z$ such that $n=x+y+z$ if and only if 
 $n=6 $ or $n\ge 8$.
\end{proposition}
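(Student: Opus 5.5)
We prove the two directions separately. The non-existence part for $n\in\{1,2,3,4,5,7\}$ is a finite check, and the existence part is a parity-based construction.

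\textbf{Non-existence.} Distinct positive integers give $x+y+z\ge 1+2+3=6$, so $n\le 5$ is impossible. For $n=7$, the only ways to write $7$ as a sum of three distinct positive integers are $1+2+4$, and $2+4$ share the factor $2$. For $n=6$ the triple $1,2,3$ works.

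\textbf{Existence for $n\ge 8$.} We split according to $n$ modulo $4$ and look for triples containing $1$.

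\emph{Case $n$ odd.} Take $x=1$, $y=2$, $z=n-3$. Then $z$ is even, which is no problem, and $z\ge 5$ gives distinctness. Since $\gcd(2,n-3)=2$ when $n$ is odd, this choice fails, so we use $y=2^k$ instead. A cleaner option is to avoid $1$ and take $x=2$, $y=m$, $z=m+2$... but that does not help either. The simplest choice for odd $n$ is
\[
\text{$1$, $\tfrac{n-1}{2}-1$, $\tfrac{n-1}{2}+1$}
\]
when $\tfrac{n-1}{2}$ is even. Then the two consecutive odd numbers differ by $2$ and are both odd, hence coprime, and they are coprime to $1$. When $\tfrac{n-1}{2}$ is odd, take $1$, $\tfrac{n-1}{2}-2$, $\tfrac{n-1}{2}+2$. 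These are odd and differ by $4$, hence coprime.

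\emph{Case $n$ even.} Exactly one of $x,y,z$ can be even, so all three being odd is impossible, as the sum would be odd. Take $x=1$, so $y+z=n-1$ is odd, with $y,z$ distinct, both greater than $1$, and coprime. Choose $y,z$ as consecutive integers $\tfrac{n-2}{2}$ and $\tfrac{n}{2}$. These are coprime, and both exceed $1$ once $n\ge 6$. Distinctness from $1$ holds since $\tfrac{n-2}{2}\ge 3$ for $n\ge 8$.

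\textbf{Checking the odd case.} For odd $n\ge 9$ with $m=\tfrac{n-1}{2}$, the two numbers $m\pm 1$ or $m\pm 2$ are odd. In the second case we must have $m-2\ge 3$ to keep them distinct from $1$, which requires $m\ge 5$, so $n\ge 11$. For odd $n$, $\tfrac{n-1}{2}$ odd means $n\equiv 3\pmod 4$, and the first such $n\ge 8$ is $n=11$. There $m=5$ gives $1,3,7$. For $n\equiv 1\pmod 4$ with $n\ge 9$, $m\ge 4$, so $m-1\ge 3$ and the triple is valid; $n=9$ gives $1,3,5$.

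\textbf{Main obstacle.} The only real difficulty is organising the parity cases so that the small values $n=8,9,10,11$ are covered by the general formulas; I would verify these few cases by hand.
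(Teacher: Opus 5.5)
Your proof is correct and is essentially the argument the paper has in mind: the paper leaves this proposition to the reader with the hint to take $x=1$, and your triples $1,\tfrac{n}{2}-1,\tfrac{n}{2}$ for even $n$ and $1,m-1,m+1$ or $1,m-2,m+2$ with $m=\tfrac{n-1}{2}$ for odd $n$, together with the observation that $7$ admits only $1+2+4$, carry this out. In a final write-up, delete the abandoned attempts at the start of the odd case (the choices $y=2$, $y=2^k$, $x=2$), and note that no separate hand check of $n=8,\dots,11$ is needed, since your general formulas already cover these values.
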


We actually won't use this proposition in the proof we give of Theorem~\ref{T:3457} in Section~\ref{S:3457}. Nevertheless, we will see that this proposition is closely related to one of the key ingredients in our proof. We leave the proof of Proposition~\ref{P:3457} to the interested reader (Hint: it suffices to consider $x=1$).

Analogous to Theorem~\ref{T:3457}, the results for the circumcenter and the centroid are similar, with the condition ``$\ell=6 $ or $\ell\ge 8$'' replaced by other restrictions. These results are given in the following two theorems.

\begin{theorem}\label{T:4610}
If $\ell\in\N$, then there exists an acute lattice triangle of lattice perimeter $\ell$, for which the circumcenter $F$ is a lattice point, if and only if 
$\ell$ is even and either $\ell=8 $ or $\ell\ge 12$.
\end{theorem}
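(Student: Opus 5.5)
The plan has three parts: a parity argument showing $\ell$ must be even, a construction for $\ell=8$ and every even $\ell\ge 12$, and a case analysis ruling out $\ell=4,6,10$. First translate the triangle so that $F$ is the origin; translation preserves lattice perimeter and acuteness. Then the vertices $A,B,C$ are lattice points with $|A|^2=|B|^2=|C|^2=N$ for some integer $N$. Since the triangle is acute, $F=0$ lies strictly inside it.

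\emph{Parity of a side.} Take a side $AB$ and write $B-A=g\,u$, where $u=(u_1,u_2)$ is primitive and $g$ is the lattice length. Because $|A|=|B|$, the vector $A+B$ is orthogonal to $u$, so $A+B=s\,w$ with $w=(-u_2,u_1)$ and $s\in\Z$. Then
\[
2A=s\,w-g\,u .
\]
Reducing mod $2$ gives $s\,w\equiv g\,u \pmod 2$. If $g$ is odd then $u\not\equiv 0$, so $s$ is odd and $(u_2,u_1)\equiv(u_1,u_2)$, i.e.\ $u\equiv(1,1)\pmod 2$. Hence a side of odd lattice length has difference vector $\equiv(1,1)\pmod 2$, and a side of even lattice length has difference $\equiv(0,0)$ (when $g$ is even the difference is $g\,u\equiv 0$). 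The three difference vectors sum to $0$, so the number of odd sides is even. Therefore $\ell$ is even. This handles all odd $\ell$, including $3,5,7$.

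\emph{Constructions.} For existence I would write down explicit acute triangles inscribed in circles $x^2+y^2=N$ with $N$ having several representations as a sum of two squares, such as $N=25,50,65,85,125$. I would look first for a base family that produces $\ell=8$ and $\ell=12,14$, possibly $16$. Then I would find a one-parameter family that raises $\ell$ by $2$. One candidate is a nearly equilateral triangle on a circle $x^2+y^2=N_k$ in which one side has odd length $g_k$ growing with $k$. Another is to stretch an isosceles triangle with apex on an axis, placing the base vertices at $(\pm a,b)$ and the apex at $(0,-\sqrt N)$. In both cases I would check acuteness by confirming that $F$ lies strictly inside the triangle (equivalently, that the dot products at the three vertices are positive).

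\emph{Excluding $\ell=4,6,10$.} This is the step I expect to be the main obstacle. For each value, list the possible triples $(g_1,g_2,g_3)$ of lattice lengths with an even number of odd entries. Then combine three constraints.
\begin{itemize}
\item An odd side has primitive direction with both coordinates odd, so its squared Euclidean length is $g^2|u|^2$ with $|u|^2\equiv 2\pmod 4$.
\item An even side is $g\,u$ with arbitrary primitive $u$, but then $A\equiv B\pmod 2$.
\item Acuteness forces $N$ to be relatively large compared with the side lengths, since the triangle must contain $0$ in its interior. Combined with Pick's theorem, the triangle has genus at least $1$ and area at least $\ell/2$.
\end{itemize}
I would first try a descent argument: show that if all $g_i$ are even, or two of them are odd, one can pass to a smaller admissible configuration. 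Failing that, I would bound $N$ in terms of $\ell$ for $\ell\le 10$ and finish by direct enumeration of lattice points on the circles $x^2+y^2=N$ for the finitely many relevant $N$. The case $\ell=10$ (for instance $(g_i)=(2,4,4)$ or $(1,1,8)$ and similar) is where I expect the most casework.
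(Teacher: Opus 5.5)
Your parity step is correct. It is an equivalent reformulation of Corollary~\ref{C:cor}: a side of odd lattice length has primitive direction $\equiv(1,1)\pmod 2$.

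The genuine gap is the exclusion of $\ell=4,6,10$, which is the real content of the theorem. Neither fallback you sketch can close it. Your constraints (direction parities, $|u|^2\equiv 2\pmod 4$, $F$ interior, Pick) only bound $N$ from below. No upper bound $N\le N(\ell)$ exists in general. To see this, identify $\Z^2$ with $\Z[i]$ and multiply the triangle by a Gaussian prime $\pi$ such that $\bar\pi$ divides none of the three primitive side directions. This is a similarity; it sends $F$ to the lattice point $\pi F$ and preserves every side's lattice length. For example, $O,(4,0),(3,3)$ (acute, $F=(2,1)$, $\ell=8$) times $3+2i$ is $O,(12,8),(3,15)$, with $F=(4,7)$, $N=65$, and still $\ell=8$. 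Since only finitely many $\pi$ are excluded, $N$ is unbounded. So a bound on $N$ for $\ell=10$ would already be equivalent to the nonexistence you want, and ``enumerate the circles'' has no finite range. The halving descent also fails: $O,(12,6),(6,12)$ has all lattice lengths $6$ and $F=(5,5)$, but its half has circumcenter $(\tfrac52,\tfrac52)$.

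What you are missing is a scale-invariant quantity that forces finiteness, namely the angles.
\begin{itemize}
\item Since $H=A+B+C-2F$ (Euler line), $H$ is a lattice point.
\item With a vertex $V$ at the origin, $H=\cot\theta\, R(A-B)$, where $R$ is the quarter turn. This gives $\tan\theta_i=\ell_i/m_i$ with $m_i\in\N$ the lattice length of $V_iH$ (Lemma~\ref{L:theta}).
\item $m_i$ is even whenever $\ell_i$ is even (Lemma~\ref{L:angle}).
\item $\gcd(\ell_i,\ell_j)=\gcd(\ell_0,\ell_1,\ell_2)$ (Lemma~\ref{L:gcd}). You never use this, yet it immediately kills triples such as $(2,4,4)$.
\end{itemize}
Combined with $\sum\theta_i=\pi$ and acuteness, these facts eliminate $\ell=4,6$ outright. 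They reduce $\ell=10$ to the triples $(1,4,5)$ and $(2,3,5)$ with finitely many $(m_0,m_1,m_2)$. The arctan equation has exactly one solution, $(2,3,5)$ with $(m_0,m_1,m_2)=(1,2,1)$, and it is excluded by applying Lemma~\ref{L:gcd} to the subtriangles $HV_0V_1$ and $HV_1V_2$.

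Separately, your existence part is only a search plan. The ``if'' direction needs explicit families covering every even $\ell\ge 12$, for example:
\begin{itemize}
\item $O,(4n+2,0),(4,8n-4)$ for $\ell=8n+4$;
\item $O,(2n+1,1),(0,6n+4)$ for $\ell=8n+6$;
\item $O,(2n+2,0),(4,2n-2)$ for $\ell\equiv 0,2\pmod 8$;
\item sporadic triangles for $\ell=8,12,14,16$.
\end{itemize}
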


\begin{theorem}\label{T:511}
If $\ell\in\N$, then there exists an acute  lattice triangle of lattice perimeter $\ell$, for which the centroid $G$ is a lattice point, if and only if 
$\ell\ge 3$ and $\ell\not\in\{5,11\}$.
\end{theorem}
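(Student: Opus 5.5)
The plan is to recast the centroid condition as a congruence on the side vectors. Then ``no multiple of $3$ among the side lattice lengths'' and a gcd argument exclude $\ell=5,11$, and explicit families of acute triangles cover all other $\ell$.

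\textbf{Reformulation.} Let the triangle be $ABC$ with side vectors $u=B-A$, $w=C-B$, $x=A-C$, so that $u+w+x=0$. The centroid $G=(A+B+C)/3$ is a lattice point iff $A+B+C\equiv 0 \pmod 3$. Translating so that $G=0$ gives $C=-A-B$, hence $u-w=3B$ and $w-x=-3(A+B)$. Conversely, $u\equiv w\equiv x \pmod 3$ forces $3A\equiv A+B+C \pmod 3$ after a translation. So $G\in\Z^2$ iff the three side vectors are congruent modulo $3$. Write $u=d_1p_1$, $w=d_2p_2$, $x=d_3p_3$ with $p_i$ primitive, so that $\ell=d_1+d_2+d_3$.

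\textbf{Necessary conditions.} Two arithmetic facts follow.
\begin{enumerate}[(i)]
\item For any prime $q$, if $q$ divides two of the $d_i$, then it divides the third, since each side vector is minus the sum of the other two.
\item If $3\mid d_i$ for some $i$, then all side vectors are $\equiv 0 \pmod 3$. Hence every $d_i$ is a multiple of $3$, the triangle is $3$ times a lattice triangle, and $\ell\ge 9$.
\end{enumerate}
For $\ell=5$ the partitions are $(1,1,3)$, excluded by (ii), and $(1,2,2)$, excluded by (i) with $q=2$. For $\ell=11$, every partition into three positive parts contains a multiple of $3$, contains two even parts, or is $(1,5,5)$. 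These three cases are excluded by (ii), by (i) with $q=2$, and by (i) with $q=5$, respectively. I expect this enumeration to be a short finite check. It needs no acuteness at all, so it rules out $\ell\in\{5,11\}$ for every lattice triangle. The values $\ell=1,2$ are impossible trivially, since $\ell\ge 3$ for any triangle.

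\textbf{Existence.} For the remaining $\ell$ I would give explicit acute triangles with $G=0$, i.e. vertices $A,B,-A-B$. The plan is to use one or two parametrized families indexed by residue classes of $\ell$, plus a handful of sporadic small cases ($\ell=3,4,6,7,8,9,10,13,\dots$) found by hand search.

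A natural family takes two primitive sides and one long side of lattice length $\ell-2$. Choose $u=(m,h)$ and $w=(m,-h)$ with $h$ large, which keeps the triangle acute (tall isosceles). Then $x=(-2m,0)$, which has lattice length $2m$, so this covers even $\ell=2m+2$. Here one needs $\gcd(m,h)=1$ and $u\equiv w\equiv x \pmod 3$, i.e. $3\mid h$ and $m\equiv -2m \pmod 3$, which always holds. Acuteness amounts to $w\cdot x<0$, $x\cdot u<0$, $u\cdot w<0$, i.e. $h^2>m^2$. So taking $h=3k$ with $\gcd(m,3k)=1$ and $3k>m$ should work for all even $\ell\ge 4$.

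For odd $\ell$ I would skew this family, e.g. $u=(m,h)$ and $w=(m+1,-h)$, and pick the lattice lengths $(1,1,\ell-2)$ or $(1,2,\ell-3)$ according to $\ell \bmod 6$. The parameters must be adjusted so that the mod-$3$ congruence holds and $h$ is large enough for acuteness. Sporadic small odd cases such as $\ell=3,7,9,13$ would be handled by hand.

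\textbf{Main obstacle.} I expect the obstacle to be the odd-$\ell$ construction. There the mod-$3$ congruence, the gcd conditions fixing the lattice lengths, and acuteness must hold simultaneously. Getting a uniform family, while avoiding the forbidden patterns of (i) and (ii), is where care is needed.
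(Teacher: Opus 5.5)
Your necessity half is correct and is essentially the paper's argument. Your (i) is Lemma~\ref{L:gcd}, and your (ii) is Lemma~\ref{L:centroid3s}; your mod-$3$ reformulation gives a neat proof of the latter. The partitions of $5$ and $11$ are correctly dispatched.

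The gap is in the existence half, which as proposed fails on the whole class $\ell\equiv 2\pmod 3$.
\begin{itemize}
\item For even $\ell=2m+2$, your isosceles family needs $3\mid h$ and $\gcd(m,h)=1$. This is impossible when $3\mid m$, so $\ell\equiv 2\pmod 6$ ($\ell=8,14,20,\dots$) is not covered. Your own (ii) predicts this: lengths $(1,1,\ell-2)$ are forbidden whenever $3\mid\ell-2$. This is easily repaired. For instance, $O,(\ell-3,0),(4,2\cdot 3^k)$ has lattice lengths $(\ell-3,2,1)$ and a lattice centroid when $\ell\equiv 2\pmod 6$, $\ell\ge 8$; alternatively, scale a smaller example by $2$.
\item The odd case is the serious one. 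The skew $u=(m,h)$, $w=(m+1,-h)$ can never satisfy $u\equiv w\pmod 3$, since the first coordinates differ by $1$.
\item The pattern $(1,2,\ell-3)$ is excluded for every odd $\ell$ by your (i) with $q=2$, since $2$ and $\ell-3$ are even but $1$ is not.
\item The pattern $(1,1,\ell-2)$ is excluded for $\ell\equiv 5\pmod 6$ by your (ii).
\end{itemize}
So the infinite class $\ell\equiv 5\pmod 6$ is left with no construction. This is precisely the class containing the exceptions $5$ and $11$, and it cannot be absorbed into ``a handful of sporadic cases''.

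What is missing is a genuinely number-theoretic input. If $\ell\equiv 5\pmod 6$ is prime, your (i) and (ii) force the three lengths to be pairwise coprime, odd and prime to $3$, with two of them $\equiv 5\pmod 6$. You must show such a triple exists, for instance $(1,c,\ell-1-c)$ with $c\equiv 5\pmod 6$ and $\gcd(c,\ell-1)=1$; this is exactly what fails for $\ell=5,11$. You must then realize the triple by an acute lattice triangle with lattice centroid.

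The paper does this in two steps. First it uses $T\mapsto nT$, which maps $\mathcal A_G(\ell)$ into $\mathcal A_G(n\ell)$. This reduces the problem to prime $\ell$ together with $\ell\in\{4,10,22,25,55,121\}$, all of which except primes $\ell\equiv 5\pmod 6$ are covered by the $\ell\equiv 1,4\pmod 6$ families. Then it takes $O,(z,0),(x,\delta\cdot 3^k)$ with $z=\ell-1-\delta$, where:
\begin{itemize}
\item for $\ell\equiv 5\pmod{18}$, $x=7$ and $\delta$ is the least prime factor $\equiv 5\pmod 6$ of $\ell-8$;
\item for $\ell\equiv 11\pmod{18}$, $x=13$ and $\delta$ is the least such prime factor of $\ell-14$;
\item for $\ell\equiv 17\pmod{18}$, $x=1$ and $\delta$ is the least such prime factor of $\ell-2$.
\end{itemize}
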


The condition ``$\ell=8 $ or $\ell\ge 12$'' of Theorem~\ref{T:4610} results from the fact that the angles of the triangle can be written as the $\arctan$ of certain rational numbers; see Lemma~\ref{L:theta}. 
The condition ``$\ell\ge 3$ and $\ell\not\in\{5,11\}$'' of Theorem~\ref{T:511} is closely related to the following, second curiosity.

\begin{proposition}\label{P:511}
If $n\in\N$, then there exist pairwise coprime, positive integers $x,y,z$, none of which is divisible by $3$, such that $n=x+y+z$, if and only if 
$n\ge 3$ and  $n\not\in\{5,11\}$.
\end{proposition}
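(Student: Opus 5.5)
The plan is to split the statement into a short exhaustive check for the forbidden values and an explicit construction for all other $n$. Throughout we fix $x=1$, following the hint given for Proposition~\ref{P:3457}. This reduces the problem to writing $m=n-1$ as $y+z$ with $y,z$ coprime and not divisible by $3$.

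\emph{Necessity.} Three positive integers sum to at least $3$, so $n\ge 3$ is forced. For $n=5$ and $n=11$ we enumerate the unordered triples whose parts all lie in $\{1,2,4,5,7,8,10\}$ (the admissible values not divisible by $3$).
\begin{itemize}
\item For $n=5$ the only candidate is $(1,2,2)$, which is not pairwise coprime.
\item For $n=11$ the candidates are $(1,2,8)$, $(2,4,5)$, $(1,5,5)$ and $(2,2,7)$. Each contains two parts with a common factor $2$ or $5$.
\end{itemize}
This finite check is routine, and it rules out $n=5$ and $n=11$.

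\emph{Sufficiency.} We split according to $n \bmod 3$.
\begin{itemize}
\item If $n\equiv 0$ or $1\pmod 3$, take $(1,1,n-2)$. Here $n-2\equiv 1$ or $2 \pmod 3$, so no part is divisible by $3$, and pairwise coprimality is trivial.
\item If $n\equiv 2\pmod 3$, write $n=3k+2$, so $m=3k+1\equiv 1\pmod 3$. Any choice of $y,z$ not divisible by $3$ with $y+z\equiv 1 \pmod 3$ must have $y\equiv z\equiv 2\pmod 3$. This case is the main obstacle.
\end{itemize}

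In the case $n\equiv 2 \pmod 3$ we distinguish the parity of $m$.
\begin{itemize}
\item If $m$ is odd ($k$ even, $k\ge 2$), take $y=2$ and $z=3k-1$. Then $z$ is odd, so $\gcd(y,z)=1$, and $z\equiv 2\pmod 3$.
\item If $m=2t$ is even, then $2t\equiv 1 \pmod 3$ gives $t\equiv 2\pmod 3$. We take $y=t-d$ and $z=t+d$ with $d\equiv 0\pmod 3$, so that $y\equiv z\equiv 2 \pmod 3$.
\begin{itemize}
\item If $t$ is even, take $d=3$. Then $y,z$ are odd, and $\gcd(y,z)$ divides $2d=6$. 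It is neither even nor divisible by $3$, hence equals $1$. This needs $t\ge 4$, i.e.\ $t\ge 8$, which excludes only $t=2$, i.e.\ $n=5$.
\item If $t$ is odd, take $d=6$. Again $y,z$ are odd, and $\gcd(y,z)$ divides $12$ while being odd and prime to $3$, so it equals $1$. This needs $t>6$, i.e.\ $t\ge 11$, which excludes only $t=5$, i.e.\ $n=11$.
\end{itemize}
\end{itemize}

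These cases cover every $n\ge 3$ except $5$ and $11$. As a sanity check, the smallest instances $n=8,14,17,23$ give the triples $(1,2,5)$, $(1,5,8)$, $(1,5,11)$ and $(1,5,17)$.
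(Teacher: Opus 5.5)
Your proof is correct and follows the only route the paper indicates. The paper gives no written proof of this proposition, just the hint to take $x=1$, and your exhaustive check for $n=5,11$ together with the constructions $(1,1,n-2)$, $(1,2,3k-1)$ and $(1,t-d,t+d)$ with $d\in\{3,6\}$ carries that hint out completely. One cosmetic slip: for $n=14$ (so $k=4$) your construction gives $(1,2,11)$, not the listed $(1,5,8)$, although both triples are valid.
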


Note that in this proposition, unlike Proposition~\ref{P:3457}, we do not assume that $x,y,z$ are distinct (so we can have $x=y=1$, for example) but we do suppose that they are coprime with $3$. Like Proposition~\ref{P:3457}, we do not actually use Proposition~\ref{P:511} in the paper, and we leave its proof to the interested reader (Hint: same hint as for Proposition~\ref{P:3457}).

The bulk of this paper is devoted to the proofs of Theorems~\ref{T:3457}, \ref{T:4610} and \ref{T:511}. In Section~\ref{S:prelim}, we give some preliminary results. Theorems~\ref{T:3457}, \ref{T:4610} and \ref{T:511} are proved in Sections~\ref{S:3457}, \ref{S:4610} and \ref{S:511}, respectively. 
In Section~\ref{S:obright}, we contrast the above results with those for obtuse and right triangles. We also discuss the consequences of assuming that more than one of the triangle's three centers is on the lattice. Finally, in Section~\ref{S:open}, we pose an open problem concerning the incenter of triangles.


\section{Preliminary observations.}\label{S:prelim}

Notice that if $A=(x,y)\in\Z^2$, then the lattice length of the segment $OA$ is $\gcd(x,y)$.

\begin{lemma}\label{L:gcd}
Suppose that $T$ is a lattice triangle with sides of lattice length $\ell_0,\ell_1,\ell_2$. 
Then $\gcd(\ell_i,\ell_j)=\gcd(\ell_0,\ell_1,\ell_2)$, for all $i\not= j$.
\end{lemma}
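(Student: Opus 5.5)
We will show that any prime power dividing two of the lattice lengths also divides the third. Since $\gcd(\ell_0,\ell_1,\ell_2)$ always divides $\gcd(\ell_i,\ell_j)$, this gives the equality. We set up the side vectors and then compare gcds of coordinates.

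Translate the triangle so that one vertex is at the origin; this preserves lattice lengths. Write the three side vectors as $u=(a,b)$, $v=(c,d)$ and $u+v$ (or $v-u$, depending on orientation; the argument is the same). By the observation just before the lemma, the side lengths are
\[
\ell_0=\gcd(a,b),\qquad \ell_1=\gcd(c,d),\qquad \ell_2=\gcd(a+c,\,b+d).
\]

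Fix $i\neq j$ and let $g=\gcd(\ell_i,\ell_j)$. The key point is that $g$ divides both coordinates of the two corresponding side vectors, because $\ell_i$ divides both coordinates of its vector, and likewise for $\ell_j$.

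The third side vector is $\pm$ the sum or difference of those two vectors. So $g$ divides both of its coordinates, and therefore $g$ divides their gcd, which is the remaining lattice length $\ell_k$. Hence $g\mid \gcd(\ell_0,\ell_1,\ell_2)$.

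Conversely, $\gcd(\ell_0,\ell_1,\ell_2)\mid g$ trivially, so the two are equal. This case is symmetric in $i,j$, so it covers every pair.

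There is no real obstacle here. The only care needed is to note that each side vector is an integer combination (with coefficients $\pm1$) of the other two, which holds because the three side vectors sum to zero when taken around the triangle.
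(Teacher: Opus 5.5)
Your proof is correct and takes essentially the same approach as the paper: $g=\gcd(\ell_i,\ell_j)$ divides both coordinates of two side vectors, hence of the third side vector (their signed sum or difference), and so $g$ divides $\ell_k$. The opening remark about prime powers is never actually used and can simply be dropped.
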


\begin{proof}
Suppose that $T$ has vertices $V_i=(x_i,y_i)$, for $i=0,1,2$, such that the side $V_{i+1}V_{i+2}$ has lattice length $\ell_i$, where the subscripts are computed modulo $3$. Note that $\ell_i=\gcd(x_{i+1}-x_{i-1},y_{i+1}-y_{i-1})$, for $i=0,1,2$. With the obvious notation, we may write $\ell_i=\gcd(V_{i+1}-V_{i-1})$. Now consider $\ell_1,\ell_2$. Let $\gcd(\ell_1,\ell_2)=n$. Obviously, $\gcd(\ell_0,\ell_1,\ell_2)$ is a divisor of $n$. We have that $n$ divides both $V_{2}-V_0$ and $V_{0}-V_1$. Hence $n$ divides $V_{2}-V_1$, and thus $n$ divides $\ell_0$. Hence $n$ divides $\gcd(\ell_0,\ell_1,\ell_2)$ and thus $\gcd(\ell_1,\ell_2)=\gcd(\ell_0,\ell_1,\ell_2)$. By the same reasoning, $\gcd(\ell_i,\ell_j)=\gcd(\ell_0,\ell_1,\ell_2)$, for all $i\not= j$.
 \end{proof}

Let us now recall the formulas for the  circumcenter $F$, the centroid $G$, and the orthocenter $H$. Consider the triangle $T$ with vertices $0,A=(x_1,y_1),B=(x_2,y_2)$. One has 
\begin{align}
F&=\frac12(3G-H),\label{E:F}\\
G&=\frac13(x_1+x_2,y_1+y_2),\label{E:G}\\
H&=\frac{x_1x_2+y_1y_2}{x_1y_2-x_2y_1}( y_2-y_1, x_1-x_2).\label{E:H}
\end{align}
The formula for $F$
 is a rearranged form of the classic Euler line equation, $2F+H=3G$, \cite[p.~71]{AN}. The formula for $G$ is just its definition. The formula for $H$ can be verified by showing that the dot products $H\cdot (A - B),(H - A)\cdot B,(H - B)\cdot A$ 
 are all zero. Figure~\ref{F:euler} gives an example of a lattice triangle $OAB$  for which $F,G,H$ are lattice
points. It has $A=(3,9),B=(0,6)$ and  $F=(4,3),G=(3,3),H=(1,3)$.

\begin{figure}
\begin{center}
\begin{tikzpicture}[x=.7cm,y=.7cm]

\draw [fill, blue!20] (0,0) -- (0,6)-- (9,3) -- cycle ;
\draw [blue,semithick] (0,0) -- (0,6)-- (9,3) -- cycle ;

\draw[fill, black] (0,0) circle (.04);
\draw[fill, black] (0,6) circle (.04);
\draw[fill, black] (9,3) circle (.04);
\draw[fill, black] (3,3) circle (.04);
\draw[fill, black] (4,3) circle (.04);
\draw[fill, black] (1,3) circle (.04);

\draw [black,font=\small]  (-.2,-.2) node {$O$} ;
\draw [black,font=\small]  (-.3,6.3) node {$B$} ;
\draw [black,font=\small]  (9.3,3.3) node {$A$} ;
\draw [black,font=\small]  (1.3,3.3) node {$H$} ;
\draw [black,font=\small]  (3.3,3.3) node {$G$} ;
\draw [black,font=\small]  (4.3,3.3) node {$F$} ;

\draw[-,blue,thick] (-1,3) -- (10,3);
\draw[-,blue,dotted,thick] (0,0) -- (1.8,3*1.8);
\draw[-,blue,dotted,thick] (0,6) -- (1.8,6-3*1.8);
\draw[-] (1.7,3*1.7)--(1.7+.3,3*1.7-.1);
\draw[-] (1.8+.3,3*1.8-.1)--(1.7+.3,3*1.7-.1);
\draw[-] (1.7,6-3*1.7)--(1.7+.3,6.1-3*1.7);
\draw[-] (1.8+.3,6.1-3*1.8)--(1.7+.3,6.1-3*1.7);

\draw[black,semithick] (4,3) circle (5);

\foreach \ii in {-1,...,10}
\draw[-,dotted,semithick] (\ii,-1) -- (\ii,7);
\foreach \jj in {-1,...,7}
\draw[-,dotted,semithick] (-1,\jj) -- (10,\jj);

\draw[->,semithick] (-1,0) -- (10,0);
\draw[->,semithick] (0,-1) -- (0,7);

  \end{tikzpicture}
\caption{A lattice triangle for which the circumcenter $F$, centroid $G$, and orthocenter $H$ are lattice
points.}\label{F:euler}
\end{center}
\end{figure}
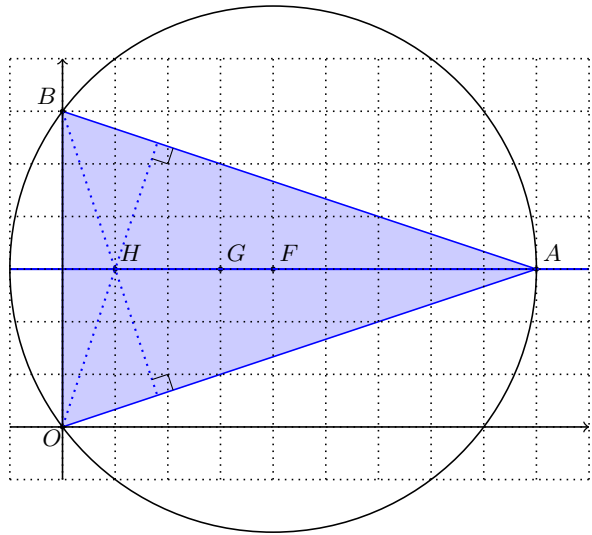

\begin{lemma}\label{L:theta}
Suppose that $T$ is a lattice triangle for which the orthocenter $H$ is a lattice point. Let $\theta$ be an interior angle of $T$ at some vertex $V$, and let $\ell_\theta$ denote the lattice length of the side opposite $\theta$. Then $\theta=\arctan\left(\frac{\ell_\theta}m\right)$,  where $m\in\N$ is the lattice length of the segment $VH$.
\end{lemma}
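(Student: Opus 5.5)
The plan is to combine a purely Euclidean identity for $\tan\theta$ with the observation that two perpendicular lattice segments have primitive direction vectors of the same Euclidean length. This lets the Euclidean ratio be replaced by the ratio of lattice lengths.

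\emph{Euclidean step.} Let $V$ be the vertex with angle $\theta$, let $P,Q$ be the other two vertices, and let $R$ be the circumradius. By the law of sines, $|PQ|=2R\sin\theta$. The classical identity for the distance from a vertex to the orthocenter gives $|VH|=2R|\cos\theta|$. I would verify this quickly from the Euler relation \eqref{E:F}: place $F$ at the origin, so $H=V+P+Q$ and $VH=P+Q$. Then $|P+Q|$ is twice the distance from $F$ to the midpoint of $PQ$, which equals $2R|\cos\theta|$ by the inscribed angle theorem. Consequently $|\tan\theta|=|PQ|/|VH|$. When $\theta$ is acute, and in particular for acute triangles, this reads $\tan\theta=|PQ|/|VH|$.

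\emph{Lattice step.} Since $H$ lies on the altitude from $V$, the segment $VH$ is perpendicular to $PQ$. Write $Q-P=\ell_\theta(a,b)$ with $\gcd(a,b)=1$; this is exactly how the lattice length $\ell_\theta$ arises. The vector $H-V$ is an integer vector (as $H$ is a lattice point) parallel to $(-b,a)$. The vector $(-b,a)$ is also primitive, so $H-V=\pm m(-b,a)$ with $m=\gcd(H-V)$, the lattice length of $VH$. Both primitive vectors have Euclidean length $\sqrt{a^2+b^2}$. Hence $|PQ|/|VH|=\ell_\theta/m$, and so $\theta=\arctan(\ell_\theta/m)$.

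\emph{Main obstacle.} The real content is the simple observation that perpendicular primitive vectors have equal norms, so the only thing to handle carefully is the degenerate and sign cases. If $\theta$ is a right angle, then $H=V$ and $m=0$, which fits the statement only with the convention $\arctan(\infty)=\pi/2$. If $\theta$ is obtuse, the Euclidean step gives $\theta=\pi-\arctan(\ell_\theta/m)$. I would therefore state that the lemma is intended for acute $\theta$ (as in the acute triangles of Theorems~\ref{T:3457} and \ref{T:4610}), or note these variants explicitly. I would also record that $m\in\N$ follows from $H\neq V$ whenever $\theta\ne\pi/2$.
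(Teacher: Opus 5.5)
Your proof is correct and is essentially the paper's argument. With $V$ at the origin, the paper reads off that $H$ is $\cot\theta$ times the quarter-turn of $A-B$ directly from \eqref{E:H}. You instead get the same magnitude relation from $|PQ|=2R\sin\theta$ and $|VH|=2R|\cos\theta|$, and the direction from the altitude. Both arguments then use the same fact, that a quarter-turn preserves primitive vectors and Euclidean length.

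Your caveat is also apt. The paper's proof tacitly needs $\cot\theta>0$, since it calls $\ell_\theta\cot\theta$ a lattice length. So the lemma really concerns acute $\theta$, which is the setting in which it is applied.
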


\begin{proof} Without loss of generality, we may take $V$ to be the origin.
Suppose that $T$ has vertices $0,A=(x_1,y_1),B=(x_2,y_2)$ and that $OAB$ is positively oriented with angle $\theta$ at $O$. Let $OA$ have length $a$ and $OB$ have length $b$. By \eqref{E:H},
\[
H=\frac{x_1x_2+y_1y_2}{x_1y_2-x_2y_1}( y_2-y_1, x_1-x_2)=\frac{x_1x_2+y_1y_2}{x_1y_2-x_2y_1}R(A-B),
\]
where $R( - )$ denotes the rotation through angle $\frac{\pi}2$. Hence, since $ x_1x_2+y_1y_2=ab\cos \theta$ and $x_1y_2-x_2y_1=ab\sin \theta$,
we have
\[
H=\cot \theta \ R(A-B).
\]
In particular, the length  $|OH|$ of the segment $OH$ is given by $|OH|=\cot \theta\  |AB|$. Now $AB$ has lattice length $\ell_\theta$, and  rotation through $\frac{\pi}2$ respects lattice points. It follows that $OH$ has lattice length $\ell_\theta \cot \theta$. Denoting this integer $m$, we have $\tan(\theta)=\frac{\ell_\theta}m$, as required.
\end{proof}

\begin{lemma}\label{L:11n}
Suppose that $T$ is an acute lattice triangle for which the sides have lattice length $1,1,m$ respectively, for some $m\in\N$. Then the orthocenter $H$ is not a lattice point. 
\end{lemma}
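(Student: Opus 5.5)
Suppose, for a contradiction, that $H$ is a lattice point. Lemma~\ref{L:theta} then converts every angle of $T$ into arithmetic data. Let the angles opposite the two sides of lattice length $1$ be $\alpha$ and $\beta$, and the angle opposite the side of lattice length $m$ be $\gamma$. By Lemma~\ref{L:theta},
\[
\tan\alpha=\frac1{p},\qquad \tan\beta=\frac1{q},\qquad \tan\gamma=\frac{m}{r},
\]
where $p,q,r\in\N$ are the lattice lengths of the segments joining the corresponding vertices to $H$. These lengths are positive because $T$ is acute, so $H$ lies strictly inside $T$ and differs from every vertex. Note that we use acuteness here: each tangent is positive and finite, and $m\cot\gamma$ is a positive integer.

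Next I use the relation $\alpha+\beta+\gamma=\pi$, which gives $\tan\gamma=-\tan(\alpha+\beta)$. Computing,
\[
\tan(\alpha+\beta)=\frac{1/p+1/q}{1-1/(pq)}=\frac{p+q}{pq-1}.
\]
Hence
\[
\frac{m}{r}=\frac{p+q}{1-pq}.
\]
The left side is positive. On the right, $p,q\ge1$ gives $pq\ge1$, so the denominator $1-pq$ is at most $0$. If $pq=1$, then $\alpha=\beta=\pi/4$ and $\gamma=\pi/2$, which is not acute. If $pq>1$, the right side is negative. Either case contradicts positivity. The key observation is that two angles with $\tan\le1$ each sum to at most $\pi/2$, forcing $\gamma\ge\pi/2$.

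The cleanest way to write this: since $\tan\alpha=1/p\le1$, we have $\alpha\le\pi/4$, and likewise $\beta\le\pi/4$. Therefore $\gamma=\pi-\alpha-\beta\ge\pi/2$, contradicting acuteness.

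I expect the only delicate point to be justifying that each lattice length $VH$ is a genuine positive integer, so that the tangents are at most $1$. Lemma~\ref{L:theta} as stated gives $m\in\N$. The possibility $H=V$ arises only for a right angle at $V$, which acuteness excludes. The argument does not use the side of length $m$ beyond its existence.
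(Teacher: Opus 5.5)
Your proposal is correct and is essentially the paper's own proof. Lemma~\ref{L:theta} gives $\tan\theta=1/m_i\le 1$ for the two angles opposite the sides of lattice length $1$, so each is at most $\pi/4$ and the third angle is at least $\pi/2$, contradicting acuteness; your detour through $\tan(\alpha+\beta)$ is valid but not needed.
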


\begin{proof}
Arguing by contradiction, suppose that $H$ is a lattice point. Let the angles opposite the sides of  lattice length $1,1,m$ be $\theta_0,\theta_1,\theta_2$, respectively. For $i=0,1$, by the previous lemma, $\tan(\theta_i)=\frac{1}{m_i}$, for some $m_i\in\N$. In particular, $\tan(\theta_i)\le 1$ and so $\theta_i\le \frac{\pi}4$, for $i=0,1$. Thus, $\theta_2=\pi-\theta_0-\theta_1\ge \frac{\pi}2$, contradicting the assumption that $T$ is acute.
\end{proof}

Recall that geometrically one  obtains the circumcenter $F$ of a triangle $OAB$ by considering the intersection of two of its perpendicular bisectors.  The following notion will be convenient.

\begin{definition} We will say that a lattice point $X\in\Z^2$ is \emph{even}, \emph{odd} or \emph{mixed} if respectively, the coordinates of $X$ are both even, both odd, or $X$ has one odd and even coordinate.
\end{definition}

\begin{lemma}\label{L:perp}
If $A$ is a mixed lattice point,  then the perpendicular bisector of $OA$ never intersects $\Z^2$.
\end{lemma}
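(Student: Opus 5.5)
The plan is to write down the equation of the perpendicular bisector explicitly and reduce the claim to a parity obstruction. Let $A=(a_1,a_2)$. A point $X=(x,y)$ lies on the perpendicular bisector of $OA$ exactly when it is equidistant from $O$ and $A$, that is, when $|X|^2=|X-A|^2$. Expanding gives the linear equation
\[
2(a_1x+a_2y)=a_1^2+a_2^2 .
\]

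Next, suppose that $X=(x,y)\in\Z^2$ satisfies this equation. The left-hand side is then an even integer. Since $A$ is mixed, exactly one of $a_1,a_2$ is odd. Hence exactly one of $a_1^2,a_2^2$ is odd, so $a_1^2+a_2^2$ is odd. This is a contradiction, so no lattice point lies on the bisector.

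There is no real obstacle here. The only step needing care is the expansion of $|X-A|^2$, which shows that the constant term is $|A|^2$ while the coefficients of $x$ and $y$ carry a factor $2$. Once that form is written down, the parity argument finishes the proof.
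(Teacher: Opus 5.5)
Your proof is correct and is essentially the paper's argument: the paper writes $A=(2m,2n+1)$ without loss of generality, gets the bisector equation $2(2n+1)y+4mx=(2n+1)^2+4m^2$, and observes that the left side is even and the right side odd. Your coordinate-free form $2(a_1x+a_2y)=a_1^2+a_2^2$ avoids the normalization step, but the parity obstruction is the same.
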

\begin{proof}
Without loss of generality, suppose that $A=(2m,2n+1)$ with $m, n \in \Z$. The bisector in question has equation  $2(2n+1)y+4mx=(2n+1)^2+4m^2$. The LHS being even and the RHS being odd,
 there is no integer solution to this  equation.
\end{proof}

\begin{corollary}\label{C:cor}
Suppose a lattice triangle $T$ has vertices $O,A,B$, and that its circumcenter $F$ is a lattice point.  Then $A,B$ are either even or odd. Moreover the lattice perimeter $\ell$ of $T$ is even, and its area $K$ is an integer.
\end{corollary}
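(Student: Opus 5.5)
We argue in three steps, using Lemma~\ref{L:perp}, Lemma~\ref{L:gcd} and Pick's theorem. Throughout, the triangle $T$ has vertices $O$, $A$, $B$ with $F\in\Z^2$.

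\textbf{Parity of the vertices.} The circumcenter $F$ lies on the perpendicular bisector of $OA$ and on that of $OB$. Since $F$ is a lattice point, Lemma~\ref{L:perp} says neither $A$ nor $B$ can be mixed. Hence each of $A$, $B$ is even or odd.

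\textbf{The perimeter is even.} The lattice lengths are $\ell_1=\gcd(A)$, $\ell_2=\gcd(B)$ and $\ell_0=\gcd(A-B)$. We first need $A-B$ to be non-mixed. The difference of two points each of which is even or odd is again even or odd, since each coordinate of $A-B$ has parity $a+b$, the same in both coordinates. So we treat the cases.

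\begin{itemize}
\item If $A$ and $B$ are both even, then every lattice length is even, so $\ell$ is even.
\item If exactly one of $A$, $B$ is odd, say $A$ odd and $B$ even, then $A-B$ is odd. Thus $\ell_1$ and $\ell_0$ are gcds of odd coordinates, hence odd, while $\ell_2$ is even. The sum $\ell=\text{odd}+\text{odd}+\text{even}$ is even.
\item If $A$ and $B$ are both odd, then $A-B$ is even. So $\ell_1$, $\ell_2$ are odd and $\ell_0$ is even, and again $\ell$ is even.
\end{itemize}

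In every case $\ell$ is even. This is consistent with Lemma~\ref{L:gcd}, though we do not need that lemma here.

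\textbf{The area is an integer.} The area is $K=\tfrac12|x_1y_2-x_2y_1|$, so we need the determinant $x_1y_2-x_2y_1$ to be even. If either point is even, the determinant is clearly even. If both are odd, then $x_1y_2$ and $x_2y_1$ are both odd, so their difference is even. Hence $K\in\Z$.

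Alternatively, Pick's theorem $K=i+\ell/2-1$ combined with $\ell$ even gives $K\in\Z$ directly. Either argument works.

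The only point needing care is that $A-B$ inherits the non-mixed property. This is what makes the parities of the three side lengths line up, and it rests on the shared-parity observation in the second step.
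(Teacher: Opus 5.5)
Your proof is correct and follows essentially the same route as the paper: Lemma~\ref{L:perp} rules out mixed vertices, and a parity check on $\gcd(A)$, $\gcd(B)$, $\gcd(A-B)$ shows that either all three lattice lengths are even or exactly one is, so $\ell$ is even; the paper then gets $K\in\Z$ from Pick's theorem. Your direct parity check of $x_1y_2-x_2y_1$ is a valid and slightly more self-contained alternative for that last step, since it needs only that $A$ and $B$ are not mixed.
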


\begin{proof}
The first assertion is a direct consequence of Lemma~\ref{L:perp}. 
Notice that the side $OA$ has even lattice length if and only if $A$ is even, and a similar statement holds for $B$. Furthermore, 
the side $AB$ has even lattice length if  and only if the vertices $A,B$ have the same parity; i.e, they are either both even or both odd. It follows that either all the sides have even lattice length, or exactly one side has even lattice length. Hence, $\ell$ is even. By Pick's Theorem \cite[pp.39--41]{AN2}, $K= \frac{\ell}2+g-1$, where $g$ is the genus of $T$. Hence, $K$ is an integer. \end{proof}


\begin{lemma}\label{L:angle} Suppose that $T$ is a lattice triangle with vertices $O,A(x_1,y_1),B(x_2,y_2)$, and let $\ell_O$ denote the lattice length of the side $AB$.
If the circumcenter $F$ of $T$ is a lattice point, then 
\begin{enumerate}
\item[(a)] the orthocenter $H$ of $T$ is a lattice point,
\item[(b)] if $\ell_O$ is even, and if $\theta$ denotes the angle of $T$ at $O$, then the lattice length of the segment $OH$ is even,
and $\tan \theta \le \frac{\ell_O}2$.
\end{enumerate}
\end{lemma}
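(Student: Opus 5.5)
The plan is to use the Euler line relation \eqref{E:F} to express $H$ in terms of $F$ and the vertices, and then apply Lemma~\ref{L:theta} together with a parity argument.

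For (a), put the vertex $O$ at the origin. Then \eqref{E:G} gives $3G=A+B$, and \eqref{E:F} rearranges to $H=3G-2F=A+B-2F$. Since $A$, $B$ and $F$ are lattice points, so is $H$. This step is immediate.

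For (b), the lattice length of $AB$ is $\ell_O=\gcd(A-B)$. So $\ell_O$ is even exactly when $A-B$ has both coordinates even, that is, when $A$ and $B$ have the same parity. By Corollary~\ref{C:cor} each of $A,B$ is even or odd, so here they are both even or both odd. In either case $A+B$ has both coordinates even, and therefore $H=A+B-2F$ has both coordinates even. The lattice length of $OH$ is $\gcd$ of the coordinates of $H$, so it is even, provided $H\neq O$.

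Next, Lemma~\ref{L:theta} applies by part (a). It gives $\tan\theta=\ell_O/m$, where $m$ is the lattice length of $OH$. Since $m$ is even and positive, $m\ge 2$, and hence $\tan\theta\le \ell_O/2$.

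The only delicate point is the degenerate situation. From the proof of Lemma~\ref{L:theta}, $H=\cot\theta\, R(A-B)$, so $H=O$ exactly when $\theta=\pi/2$. In that case $\tan\theta$ is undefined and the statement is vacuous or must be excluded; I would note this explicitly, since the paper applies the lemma to acute triangles. If $\theta$ is obtuse, the signed version of the argument gives $\tan\theta<0$, and the inequality holds trivially. So the substantive content is the acute case, where $m\ge 2$ as shown.
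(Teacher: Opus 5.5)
Your proof is correct and is essentially the paper's own argument. Part (a) comes from $H=A+B-2F$. For (b), the evenness of $\ell_O$ forces $A+B\in 2\Z^2$ (you go via Corollary~\ref{C:cor}, though $A+B\equiv A-B \pmod 2$ already suffices), hence $H\in 2\Z^2$, and Lemma~\ref{L:theta} with $m\ge 2$ gives $\tan\theta\le \ell_O/2$. Your remarks on the right-angle case ($H=O$) and the obtuse case just make explicit the acuteness that the paper tacitly assumes when it invokes Lemma~\ref{L:theta}.
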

\begin{proof}(a) follows immediately from \eqref{E:F}.

(b). From \eqref{E:G} and the formula $2F+H=3G$, we have
\begin{equation}\label{E:H2}
H=(x_1+x_2,y_1+y_2)-2F.
\end{equation}
By Corollary~\ref{C:cor}, as $\ell_O$ is even, the vertices $A$ and $B$ are either both even or both odd. Hence $x_1+x_2$ and $y_1+y_2$ are both even. Hence, by \eqref{E:H2}, $H\in2\Z^2$. Thus the lattice length of $OH$ is even.
In particular, since the lattice length of $OH$ is even, it is at least $2$, and hence $\tan \theta \le \frac{\ell_O}2$ by Lemma~\ref{L:theta}.
\end{proof}

\begin{lemma}\label{L:mid3} Suppose that $T$ is an acute lattice triangle and that the circumcenter $F$ of $T$ is a lattice point. Let $\ell_0,\ell_1,\ell_2$ be the lattice lengths of the sides of $T$, in increasing order. Then $\ell_1\ge 3$.
\end{lemma}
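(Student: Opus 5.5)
We argue by contradiction: suppose $\ell_1\le 2$, so that $\ell_0\le\ell_1\le 2$. The idea is to find two angles of $T$ that are each at most $\frac{\pi}4$. Then the third angle is at least $\frac{\pi}2$, contradicting acuteness. Throughout, note that translating $T$ by a lattice vector keeps $F$ a lattice point. So Lemma~\ref{L:angle} may be applied with any chosen vertex placed at the origin. By Lemma~\ref{L:angle}(a), $H$ is also a lattice point, so Lemma~\ref{L:theta} is available.

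\emph{Case $\ell_1=1$.} Then $\ell_0=1$, so the sides have lattice lengths $1,1,\ell_2$. Since $H$ is a lattice point, this directly contradicts Lemma~\ref{L:11n}.

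\emph{Case $\ell_1=2$, $\ell_0=1$.} By Lemma~\ref{L:gcd}, $\gcd(\ell_0,\ell_1,\ell_2)=\gcd(\ell_0,\ell_1)=1$. The proof of Corollary~\ref{C:cor} shows that either all three sides have even lattice length or exactly one does. Since $\ell_0=1$ is odd, exactly one side is even, namely $\ell_1=2$, and $\ell_2$ is odd.
\begin{itemize}
\item For the angle $\theta_0$ opposite the side of lattice length $1$, Lemma~\ref{L:theta} gives $\tan\theta_0=\frac1m$ with $m\in\N$. Hence $\theta_0\le\frac{\pi}4$.
\item For the angle $\theta_1$ opposite the side of lattice length $2$, place that opposite vertex at the origin. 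Lemma~\ref{L:angle}(b) then gives $\tan\theta_1\le \frac22=1$. Hence $\theta_1\le\frac{\pi}4$.
\end{itemize}
Thus $\theta_2\ge\frac{\pi}2$, a contradiction.

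\emph{Case $\ell_0=\ell_1=2$.} By Lemma~\ref{L:gcd}, $\gcd(\ell_0,\ell_1,\ell_2)=\gcd(\ell_0,\ell_1)=2$, so $\ell_2$ is also even. Applying Lemma~\ref{L:angle}(b) at each of the two vertices opposite the sides of lattice length $2$ gives $\tan\theta_0\le1$ and $\tan\theta_1\le1$. So both angles are at most $\frac{\pi}4$, and the same contradiction follows.

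The only delicate point is the parity bookkeeping in the mixed case $\ell_0=1$, $\ell_1=2$. There we must justify that Lemma~\ref{L:angle}(b) applies to the side of lattice length $2$ (it requires only that this side be even, which holds). We must also check that Lemma~\ref{L:theta} still gives the bound $\tan\theta_0\le 1$ for the side of lattice length $1$. Both follow from the earlier lemmas, so all cases with $\ell_1\le 2$ are excluded, and hence $\ell_1\ge 3$.
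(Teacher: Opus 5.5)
Your proof is correct and follows the paper's argument essentially verbatim. You rule out $\ell_1=1$ via Lemma~\ref{L:11n} together with Lemma~\ref{L:angle}(a); for $\ell_1=2$ you bound each angle opposite a side of lattice length $2$ by Lemma~\ref{L:angle}(b) and an angle opposite a side of lattice length $1$ by Lemma~\ref{L:theta}, forcing $\theta_2\ge\frac{\pi}{2}$. The added parity bookkeeping (that $\ell_2$ is odd, resp.\ even) is harmless but unnecessary, since Lemma~\ref{L:angle}(b) only requires the side in question to have even lattice length.
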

\begin{proof}
Let the angle opposite the side of lattice length $\ell_i$ be $\theta_i$, for $i=0,1,2$, respectively. 
Suppose $\ell_1\le 2$. 
If $\ell_1=1$, then $\ell_0=1$ and this is impossible by Lemmas~\ref{L:11n} and \ref{L:angle}(a).
Thus, $\ell_1=2$.
Then by Lemma~\ref{L:angle}(b), $\tan \theta_1 \le \frac{\ell_{1}}2\le 1$, and so $\theta_1\le \pi/4$.
Similarly, if $\ell_0=2$, then $\theta_0\le \pi/4$. Moreover, if $\ell_0=1$, then $\tan \theta_0\le \frac{\ell_{1}}1\le 1$, and so $\theta_0\le \pi/4$, by Lemma~\ref{L:theta}.  But then $\theta_2=\pi-\theta_0-\theta_1\ge \frac{\pi}2$, contradicting the assumption that $T$ is acute. 
\end{proof}

\begin{lemma}\label{L:centroid3s} Suppose that $T$ is a lattice triangle with its centroid $G$ on the lattice. If the lattice length of one of the sides of $T$ is a multiple of $3$, then the lattice length of each of the sides of $T$ is a multiple of $3$.
\end{lemma}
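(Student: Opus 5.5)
Place one vertex at the origin, so $T$ has vertices $O$, $A=(x_1,y_1)$, $B=(x_2,y_2)$. By \eqref{E:G}, the centroid $G$ is a lattice point exactly when $A+B\equiv 0 \pmod 3$ componentwise, that is, $B\equiv -A \pmod{3\Z^2}$. The plan is to read each lattice length modulo $3$ directly from this congruence, in the same spirit as the parity argument in Corollary~\ref{C:cor}.

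The key observation is that a segment $PQ$ between lattice points has lattice length divisible by $3$ if and only if $Q-P\in 3\Z^2$. This holds because the lattice length is $\gcd(Q-P)$, and $3$ divides this gcd exactly when $3$ divides both coordinates of $Q-P$. With this in hand, the three side vectors are
\[
A,\qquad B\equiv -A,\qquad B-A\equiv -2A\equiv A \pmod{3\Z^2}.
\]
All three are congruent to $\pm A$ modulo $3\Z^2$. Since negation preserves membership in $3\Z^2$, either all three lie in $3\Z^2$ or none does.

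Now suppose one side has lattice length divisible by $3$. Its side vector lies in $3\Z^2$, so $A\in 3\Z^2$, and therefore the other two side vectors also lie in $3\Z^2$. Hence all three lattice lengths are multiples of $3$.

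The argument is essentially routine. The only point needing care is the equivalence between divisibility of the lattice length by $3$ and membership of the side vector in $3\Z^2$, which follows immediately from the gcd description of lattice length noted before Lemma~\ref{L:gcd}. Lemma~\ref{L:gcd} itself would also work as an alternative route, but the direct congruence computation above is cleaner.
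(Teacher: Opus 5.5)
Your proof is correct and is essentially the paper's argument. The paper applies a unimodular affine map to place the side of lattice length $3n$ as $O(3n,0)$, then reads off from $G=\frac13(x+3n,y)\in\Z^2$ that $3$ divides $x$ and $y$; this is your congruence $B\equiv -A\pmod{3\Z^2}$ with $A\in 3\Z^2$, and your version just skips the $GL(2,\Z)$ normalization (a translation suffices) and gets the symmetric ``all or none'' statement directly from the observation that all three side vectors are $\equiv\pm A$.
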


\begin{proof} Suppose that $T$ is a lattice triangle with it centroid $G$ on the lattice and that one of the sides of $T$ has lattice length $3n$, for some $n\in\N$.
By applying a map of the form $v\mapsto Mv+u$, where $M\in GL(2,\Z)$ and $u\in\Z^2$, we may assume that $T$ has vertices $O,A(3n,0),B(x,y)$, where $x,y\in\Z$ with $y>0$; cf.~the $x$-axis Lemma of \cite{Rab}. Since $G=\frac13(x+3n,y)$ is a lattice point, $x$ and $y$ must be divisible by $3$. Hence, $\gcd(x,y)$ and $\gcd(x-3n,y)$ are both multiples of $3$. So the lattice length of each of the sides of $T$ is a multiple of $3$.
\end{proof}


\section{Proof of Theorem \ref{T:3457}.}\label{S:3457}

First suppose $\ell=6 $ or $\ell\ge 8$. We will exhibit an acute lattice triangle of lattice perimeter $\ell$ for which the orthocenter is a lattice point.
If $\ell$ is even, say $\ell=2n$, with $n\ge 3$, consider the triangle $O,A=(n,0),B=(1,n-1)$. It has lattice perimeter $\ell$ and its orthocenter is the lattice point $(1,1)$. The angles $BOA$ and $OAB$ are acute, as $n>1$. The angle $ABO$ is acute since, calculating the vector dot product, $B\cdot (B-A)=(n-1)(n-2)$, which is positive for  $n\ge 3$.

If $\ell$ is odd with $\ell\ge 9$, we proceed as follows. If $\ell\equiv 1 \pmod 4$, 
 consider the triangle $O,A=(\frac12(\ell+1),0),B=(2,\frac12(\ell-3))$. It is easy to verify that $OAB$ has lattice perimeter $\ell$ (this is where we use the $\ell\equiv 1 \pmod 4$ assumption) and its orthocenter is the lattice point $(2,2)$. The angles $BOA$ and $OAB$ are acute, as $\ell>3$. The angle $ABO$ is acute since $B\cdot (B-A)=\frac14(\ell-3)(\ell-7)$, which is positive for $\ell\ge 9$.

If $\ell\equiv 3 \pmod 4$ with $\ell\ge 11$, consider the triangle $O, A=(\frac12(\ell+3),0), B=(6,\frac32(\ell-9))$. It has lattice perimeter $\ell$ and its orthocenter is the lattice point $(6,2)$. The angles $BOA$ and $OAB$ are acute, as $\ell>9$. The angle $ABO$ is acute since $B\cdot (B-A)=\frac34(\ell-9)(3\ell-31)$, which is positive for $\ell\ge 11$.

Now suppose $T$ is an acute lattice triangle  of lattice perimeter $\ell\in\{3,4,5,7\}$. We will derive a contradiction. Let the sides have lattice length $\ell_0,\ell_1,\ell_2$ (so $\ell_0+\ell_1+\ell_2=\ell$). 
Here is the complete list of the possible side lattice lengths $\ell_0,\ell_1,\ell_2$, each in increasing order; there are $8$ potential cases, which we label from $1$ to $8$:
\begin{align*}
\ell=3 &:\ (1)\ 1,1,1,\\
\ell=4 &:\ (2)\ 1,1,2,\\
\ell=5 &:\ (3)\ 1,1,3,\qquad (5)\ 1,2,2,\\
\ell=7 &:\ (4)\ 1,1,5,\qquad (6)\ 1,2,4,\qquad (7)\ 1,3,3,\qquad (8)\ 2,2,3.
\end{align*}
In cases $1-4$, there are two edges of side length $1$. So the orthocenter is not a lattice point in these cases, by Lemma~\ref{L:11n}.
In cases $5-8$, the three lattice lengths have no common prime factor, but there is a pair of lattice lengths that are not relatively prime. 
Hence by Lemma~\ref{L:gcd}, there is no lattice triangle with sides of these lattice lengths. This completes the proof of Theorem \ref{T:3457}.
\hfill $\qed$


\section{Proof of Theorem \ref{T:4610}.}\label{S:4610}

By Corollary~\ref{C:cor}, we need only consider triangles with even lattice perimeter. First suppose $\ell$ is even with $\ell=8 $ or $\ell\ge 12$. We will exhibit an acute lattice triangle of lattice perimeter $\ell$ for which the orthocenter is a lattice point.
Let $n\ge 1$, and set $A=(4n+2,0), B=(4,8n-4)$.  Computing the dot products $A\cdot B, A\cdot(A-B), B\cdot(B-A)$, one finds that the triangle $OAB$ is acute. The circumcenter is $F=(2n+1, 4n-3)$, since $|AF|^2=|BF|^2=|OF|^2=20n^2-20n+10$. The lattice perimeter of $OAB$ is $\ell=8n+4$. So this gives acute examples for $\ell \equiv 4 \pmod 8$ and $\ell \ge 12$. 

To get examples for $\ell$ of the form $8n+ 6$, consider $A=(2n+1,1),B=(0,6n+4)$, for $n\ge 2$. Here $F=(n-1,3n+2)$; the squared distances to the vertices are $5 + 10 n + 10 n^2$. Moreover, the angles are acute. So this gives examples for $\ell= 8n+ 6$ and $\ell\ge 22$.

For more of the required examples, let $n\ge 4$, and set $A=(2n+2,0), B=(4,2n-2)$. The triangle $OAB$ is acute and the circumcenter is $F=(n+1, n-3)$; the squared distances to the vertices are $2n^2-4n+10$. For $n$ even, $OAB$ has lattice perimeter $\ell=4n+2$, while for $n$ odd,  $\ell=4n+4$. So this gives acute examples for $\ell \equiv 0$ and $2 \pmod 8$ and $\ell \ge 18$. 

The above  examples miss four possible lattice perimeters, namely $\ell=8,12,14,16$. Here are examples in these cases:
\begin{itemize}
\item $\ell=8$;  $A(4,0),B(3,3)$. Here $F=(2,1)$.
\item $\ell=12$;  $A(6,0),B(1,5)$. Here $F=(3,2)$.
\item  $\ell=14$; $A(8,0),B(3,5)$. Here $F=(4,1)$.
\item  $\ell=16$; $A(8,0),B(1,7)$. Here $F=(4,3)$.
\end{itemize}

We now show that there are no such triangles in the cases $\ell=4,6,10$.  
The case $\ell=4$ is readily dispatched, since by Lemma~\ref{L:angle}, if the circumcenter  is a lattice point, the orthocenter would be a lattice point, but that is impossible for an acute lattice triangle when the lattice perimeter is $4$, by Theorem~\ref{T:3457}. It remains to treat the cases $\ell=6,10$.

Suppose that $T$ is an acute lattice triangle of lattice perimeter $\ell$ and that the circumcenter $F$ of $T$ is a lattice point. Let $\ell_0,\ell_1,\ell_2$ be the lattice lengths of the sides of the triangle, in increasing order, and let $\ell=\ell_0+\ell_1+\ell_2$. Let the angle opposite the side of lattice length $\ell_i$ be $\theta_i$, for $i=0,1,2$, respectively. 
First suppose that  $\ell=6$. Then there are three possibilities for $(\ell_0,\ell_1,\ell_2)$, namely $(1,1,4), (1,2,3), (2,2,2)$, but none of these satisfy the conclusions of Lemma~\ref{L:mid3}.
Finally,  suppose $\ell=10$. There are now 8 possibilities for $(\ell_0,\ell_1,\ell_2)$, namely 
\[
(1,1,8),\  (1,2,7),\  (1,3,6),\  (1,4,5),\  (2,2,6),\  (2,3,5),\  (2,4,4),\  (3,3,4).
\]
Of these, only five satisfy Lemma~\ref{L:mid3}, namely, 
\[
(1,3,6),\quad (1,4,5),\quad (2,3,5),\quad (2,4,4),\quad (3,3,4),
\]
 and of these only two satisfy Lemma~\ref{L:gcd}, namely $(1,4,5)$, and $(2,3,5)$. For the first case, $(\ell_0,\ell_1,\ell_2)=(1,4,5)$, Lemmas~\ref{L:theta} and \ref{L:angle} give
\[
\theta_0+\theta_1+\theta_2= \arctan(\frac1{m_0})+\arctan(\frac4{2m_1})+\arctan(\frac5{m_2}),
\]
for some positive integers $m_0,m_1,m_2$. Let us set 
\[
f(m_0,m_1,m_2)=\arctan(\frac1{m_0})+\arctan(\frac2{m_1})+\arctan(\frac5{m_2}),
\]
which is the RHS of the preceding expression. We claim that there are no values of $m_0,m_1,m_2$ for which $f(m_0,m_1,m_2)=\pi$. Indeed, calculations show that $f(m_0,m_1,m_2)>\pi$ for  $(m_0,m_1,m_2)=(1,1,1)$, and $f(m_0,m_1,m_2)<\pi$ for  $(m_0,m_1,m_2)=(1,1,2), (1,2,1)$ and $(2,1,1)$; see Table~\ref{T:145}. Hence $f(m_0,m_1,m_2)<\pi$ for all  $(m_0,m_1,m_2)$ with $m_0+m_1+m_2\ge 4$. 

\begin{table}
\begin{tabular}{c|c}
  \hline
  $m_0,m_1,m_2$ & $\frac1{\pi}(\arctan(\frac1{m_0})+\arctan(\frac2{m_1})+\arctan(\frac5{m_2}))$ (to 6 decimal places) \\ \hline
$1,1,1$ & $1.03958$\\
$1,1,2$ & $  0.981297$\\
$1,2,1$ & $  0.937167$\\
$2,1,1$ & $  0.937167$\\
\hline
\end{tabular}
\caption{Values in the $(\ell_0,\ell_1,\ell_2)=(1,4,5)$ case}\label{T:145}
\end{table}

For the remaining case, $(\ell_0,\ell_1,\ell_2)=(2,3,5)$, Lemmas~\ref{L:theta} and \ref{L:angle} give
\[
\theta_1+\theta_2+\theta_3= \arctan(\frac2{2m_0})+\arctan(\frac3{m_1})+\arctan(\frac5{m_2}),
\]
for some positive integers $m_0,m_1,m_2$. Let us define $g(m_0,m_1,m_2)=\arctan(\frac1{m_0})+\arctan(\frac3{m_1})+\arctan(\frac5{m_2})$, which is the RHS of the above expression. We claim that there is exactly one choice of values of $m_0,m_1,m_2$ for which $g(m_0,m_1,m_2)=\pi$, namely $(m_0,m_1,m_2)=(1,2,1)$. Indeed, calculations show that $g(m_0,m_1,m_2)>\pi$ for  $(m_0,m_1,m_2)=(1,1,1)$ and $(1,1,2)$, and that $g(m_0,m_1,m_2)<\pi$ for all values of $(m_0,m_1,m_2)$, other than $(1,1,1), (1,1,2)$ and $(1,2,1)$, with $m_0+m_1+m_2\le 5$; see Table~\ref{T:235}. It follows that $g(m_0,m_1,m_2)<\pi$ for all  $(m_0,m_1,m_2)$ with $m_0+m_1+m_2\ge 5$. Consequently, we have just one case left to eliminate: $(m_0,m_1,m_2)=(1,2,1)$.

\begin{table}
\begin{tabular}{c|c}
  \hline
  $m_0,m_1,m_2$ & $\frac1{\pi}(\arctan(\frac1{m_0})+\arctan(\frac3{m_1})+\arctan(\frac5{m_2}))$ (to 6 decimal places) \\ \hline
$1,1,1$ & $1.08475$\\
$1,1,2$ & $1.02646$\\
$1,2,1$ & $ 1$\\
$2,1,1$ & $  0.982334$\\
$1,1,3$ & $  0.975563$\\
$1,2,2$ & $ 0.941714$\\
$1,3,1$ & $  0.937167$\\
$2,1,2$ & $  0.924048$\\
$2,2,1$ & $  0.897584$\\
$3,1,1$ & $  0.937167$\\
\hline
\end{tabular}
\caption{Values in the $(\ell_0,\ell_1,\ell_2)=(2,3,5)$ case}\label{T:235}
\end{table}

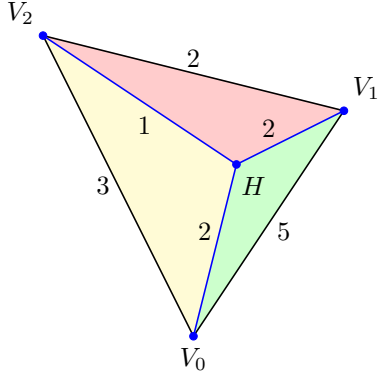
\begin{figure}
\begin{center}
\begin{tikzpicture}[scale=1]

\draw [fill, red!20] (4/7,16/7) 
  -- (2,3)  
  -- (-2,4)  
  -- cycle;

\draw [fill, green!20] (0,0) 
  -- (2,3)  
  -- (4/7,16/7)  
  -- cycle;
\draw [fill, yellow!20] (0,0) 
  -- (-2,4)  
  -- (4/7,16/7)  
  -- cycle;

\draw [black,semithick] (0,0) 
  -- (2,3)  
  -- (-2,4)   
 -- cycle;

\draw[-, blue,semithick] (0,0) -- (4/7,16/7);
\draw[-, blue,semithick] (2,3) -- (4/7,16/7);
\draw[-, blue,semithick] (-2,4) -- (4/7,16/7);

\draw[fill, blue] (0,0) circle (.05);
\draw[fill, blue] (2,3) circle (.05);
\draw[fill, blue] (-2,4) circle (.05);
\draw[fill, blue] (4/7,16/7) circle (.05);

\draw [black]  (0,-.3) node {$V_0$} ;
\draw [black]  (2.3,3.3) node {$V_1$} ;
\draw [black]  (-2.3,4.3) node {$V_2$} ;
\draw [black]  (.8,2) node {$H$} ;
\draw [black]  (0,3.7) node {$2$} ;
\draw [black]  (-1.2,2) node {$3$} ;
\draw [black]  (1.2,1.4) node {$5$} ;
\draw [black]  (.15,1.4) node {$2$} ;
\draw [black]  (1.,2.76) node {$2$} ;
\draw [black]  (-.65,2.8) node {$1$} ;
  \end{tikzpicture}
\caption{Triangle with $(\ell_0,\ell_1,\ell_2)=(2,3,5)$ and $(m_0,m_1,m_2)=(1,2,1)$.}\label{F:235121} 
\end{center}
\end{figure}

Let the vertices of the triangle be denoted $V_0,V_1,V_2$, such that $\ell_i$ is the lattice length of the side $V_{i+1}V_{i+2}$ opposite $V_i$, where the subscripts are computed modulo $3$. Recall the meaning of the numbers $m_0,m_1,m_2$ from Lemma~\ref{L:theta}:
\begin{itemize}
\item $2m_0$ is the lattice length of the segment from $H$ to $V_0$.
\item $m_1$ is the lattice length of the segment from $H$ to $V_1$.
\item $m_2$ is the lattice length of the segment from $H$ to $V_2$.
\end{itemize}
So, for $(\ell_0,\ell_1,\ell_2)=(2,3,5)$ and $(m_0,m_1,m_2)=(1,2,1)$, we have the general situation shown in Figure~\ref{F:235121}.
However, notice that the triangles $HV_0V_1$ and $HV_1V_2$ are lattice triangles and both violate Lemma~\ref{L:gcd}. So the case $(\ell_0,\ell_1,\ell_2)=(2,3,5)$ and $(m_0,m_1,m_2)=(1,2,1)$ is impossible. This completes the proof of Theorem \ref{T:4610}. \hfill $\qed$


\section{Proof of Theorem \ref{T:511}.}\label{S:511}

Suppose that $T$ is a lattice triangle of  lattice perimeter $\ell$ with its centroid $G$ on the lattice.
If $\ell=5$, then $T$ must have sides of lattice length $(1,1,3)$ or $(1,2,2)$, in increasing order, but the first case is impossible by Lemma~\ref{L:centroid3s}, and the second case is impossible by Lemma~\ref{L:gcd}.
Similarly, if $\ell=11$, then the lattice lengths of the sides of $T$ is one of the following $10$ cases, each in increasing order.
\begin{align*}
&(3,4,4),\quad (1,5,5),\quad (2,4,5),\quad (3,3,5),\quad (1,4,6),\\
&(2,3,6),\quad (1,3,7),\quad (2,2,7),\quad (1,2,8),\quad (1,1,9).
\end{align*}
Of these, $8$ are impossible by Lemma~\ref{L:gcd}, while the remaining cases, $(1,3,7)$ and $(1,1,9)$, are impossible by Lemma~\ref{L:centroid3s}. This proves the forward direction of the theorem.

Now suppose that $\ell\ge 3$ with $\ell\not\in\{5,11\}$. We will construct an acute lattice triangle $T$ of lattice perimeter $\ell$, whose centroid $G$ is  a lattice point.
First note that a triangle exists when $\ell=3$, namely $T=O,(1,2),(2,1)$, which has centroid $(1,1)$. 
For the rest of the proof  we will construct solutions of the form $T=O,A(z,0),B(x,y)$. In order to have $G\in\Z^2$, we require that $x+z$ and $y$ both be multiples of $3$.
In order to ensure that our triangle is acute, it suffices to choose $x$ such that $0<x<z$, and that $y$ is sufficiently large that the angle $OBA$ is acute.

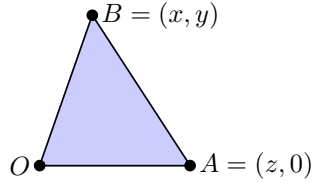
\begin{figure}
\begin{center}
\begin{tikzpicture}[>=latex]
  \draw[semithick] (0,0) coordinate(A) -- (0.7,2.0) coordinate(B) -- (2.,0) coordinate(C) -- cycle; 
  \draw[semithick,fill,blue!20] (0,0) coordinate(A) -- (0.7,2.0) coordinate(B) -- (2.,0) coordinate(C) -- cycle; 
  \draw[semithick] (0,0) coordinate(A) -- (0.7,2.0) coordinate(B) -- (2.,0) coordinate(C) -- cycle; 
\filldraw[black] (0,0) circle (2pt) node[anchor=east]{$O$}; 
 \filldraw[black] (0.7,2.0)  circle (2pt) node[anchor=west]{$B=(x,y)$};
\filldraw[black]  (2.,0) circle (2pt) node[anchor=west]{$A=(z,0)$};
\end{tikzpicture}
\caption{Acute triangle model.}
\end{center}
\end{figure}

It will be useful to first show that such triangles exist when $\ell$ is congruent to  $1$ or  $4$, modulo $6$. 

\begin{enumerate}[(i)]
\item[(i)] If $\ell\equiv 1\pmod 6$, set $z=\ell-2, x=4,y=(\ell-2)3^k$, for sufficiently large $k$. Note that $OA$ has lattice length $\ell-2$, $OB$ has lattice length $\gcd(4,(\ell-2)3^k)=1$ 
 and $AB$ has lattice length $\gcd(\ell-6,(\ell-2)3^k)=1$. 
Thus  $T=OAB$ has lattice perimeter $\ell$, independent of the choice of $k$. Note that $\ell\ge7$ and hence $x<z$. And $x+z$ and $y$ are both multiples of $3$, as required.

\item[(ii)] If $\ell\equiv 4\pmod 6$, set $z=\ell/2, x=1,y=\frac12(\ell-2)3^k$, for sufficiently large $k$. Then $OA$ has lattice length $\ell/2$, $OB$ has lattice length $\gcd(1,\frac12(\ell-2)3^k)=1$, and $AB$ has lattice length $\gcd(\frac12(\ell-2),\frac12(\ell-2)3^k=\frac12(\ell-2)$ 
. Thus  $T=OAB$ has lattice perimeter $\ell$. Note that $\ell\ge4$ and hence $x<z$. And $x+z$ and $y$ are both multiples of $3$, as required.
\end{enumerate}

For convenience, let $\mathcal A_G(\ell)$ denote the set of  integers $\ell$ for which there exists an acute lattice triangle $T$ of lattice $\ell$, whose centroid is a lattice point.
From above, there is no triangle $T\in \mathcal A_G(\ell)$ for $\ell=2,5$  or $11$. But if  we have a triangle $T\in \mathcal A_G(\ell)$ for some $\ell$, then multiplying $T$ by an integer $n$, we obtain a triangle in $\mathcal A_G(n\ell)$. So to complete the proof, it suffices to show that $\mathcal A_G(\ell)$ is non-empty for $\ell\in\{2^2,5^2,11^2,2\cdot 5,2\cdot 11,5\cdot 11\}$, and for each prime $\ell$ other than $2, 5$ and $11$. Note that $5^2,11^2,5\cdot 11$ are congruent to $1$ modulo $6$, while $2^2,2\cdot 5,2\cdot 11$ are congruent to $4$ modulo $6$. So these $6$ integers are covered by constructions (i) and (ii) above. Consequently, using (i) and (ii)  again, it remains to treat the case where $\ell$ is prime,  $\ell\equiv 5\pmod 6$, and $\ell\not\in\{5,11\}$.

We further examine the possibilities according the value of $\ell$ modulo $18$. As $\ell\equiv 5\pmod 6$, there are $3$ cases: $\ell\equiv 5, 11$ or $17$, modulo $18$. 
It is convenient to introduce some notation. If  $n\in \N$, let $\delta(n)$ denote the smallest prime divisor of $n$ that is congruent to $5$ modulo $6$, if it exists, and set $\delta(n)=\infty$ otherwise. As before, we will construct triangles of the form  $T=O, A(z,0), B(x,y)$ with $0<x<z$, and $x+z,y$ are multiples of $3$, and  $y$ is sufficiently large that the angle $OBA$ is acute.

\begin{enumerate}
\item[(iii)] Suppose $\ell\equiv 5\pmod{18}$ with $\ell\not=5$. Then $\ell-8\equiv 15\pmod{18}$, and hence $\frac13(\ell-8)\equiv 5\pmod{6}$ and so $\frac13(\ell-8)$ has  a prime divisor congruent to $5$ modulo $6$. Hence  $\delta(\ell-8)$ is well defined. Now set $z=\ell-1-\delta(\ell-8), x=7,y=\delta(\ell-8)\cdot 3^k$, for sufficiently large $k$. Note that as $\ell$ and $\delta(\ell-8)$ are congruent to $5$ modulo $6$, we have $x+z\equiv 0\pmod3$, as required. Moreover, $\delta(\ell-8)<\ell-8$, so $z>7=x$.
Also, $OA$ has lattice length $\ell-1-\delta(\ell-8)$, $OB$ has lattice length $\gcd(7,\delta(\ell-8)\cdot 3^k)=1$, since $3, 7, \delta(\ell-8)$ are three distinct primes, and $AB$ has lattice length $\gcd(\ell-8-\delta(\ell-8),\delta(\ell-8)\cdot 3^k) = \delta(\ell-8)\gcd(\frac{\ell-8}{\delta(\ell-8)} -1,3^k)=\delta(\ell-8)$. Thus  $T=OAB$ has lattice perimeter $\ell$, as required.

\item[(iv)] Suppose $\ell\equiv 11\pmod{18}$ with $\ell\not=11$. Then $\ell-14\equiv 15\pmod{18}$ and, arguing as in case (iii), $\frac13(\ell-14)\equiv 5\pmod{6}$ and it follows that  $\frac{1}{3}(\ell-14)$ has a prime divisor congruent to $5$ modulo $6$.
Now set $z=\ell-1-\delta(\ell-14), x=13,y=\delta(\ell-14)\cdot 3^k$, for sufficiently large $k$. 
Then,  as $\ell$ and $\delta(\ell-14)$ are congruent to $11$ modulo $18$, we have $x+z\equiv 0\pmod3$. We also have $\delta(\ell-14)<\ell-14$, so $z>13=x$.
Furthermore,
$OA$ has lattice length $\ell-1-\delta(\ell-14)$, $OB$ has lattice length $\gcd(13,\delta(\ell-14)\cdot 3^k)=1$, since $13, \delta(\ell-14), 3 $ are three distinct primes, and $AB$ has lattice length $\gcd(\ell-14-\delta(\ell-14),\delta(\ell-14)\cdot 3^k)= \delta(\ell-14)\gcd(\frac{\ell-14}{\delta(\ell-14)} - 1, 3^k)=\delta(\ell-14)$. Thus  $T=OAB$ has lattice perimeter $\ell$, as required.

\item[(v)] Suppose $\ell\equiv 17\pmod{18}$. Then $\ell-2\equiv 15\pmod{18}$ and, arguing again as in case (iii), $\frac13(\ell-2)\equiv 5\pmod{6}$ and it follows that  $\frac{1}{3}\delta(\ell-2)$ has a primes divisor congruent to $5$ modulo $6$.
Now set $z=\ell-1-\delta(\ell-2), x=1,y=\delta(\ell-2)\cdot 3^k$, for sufficiently large $k$. 
Then, as in the two previous cases, $x+z\equiv 0\pmod3$ and $z>x$. Finally, $OA$ has lattice length $\ell-1-\delta(\ell-2)$, $OB$ has lattice length $\gcd(1,\delta(\ell-2)\cdot 3^k)=1$, and $AB$ has lattice length $\gcd(\ell-2-\delta(\ell-2),\delta(\ell-2)\cdot 3^k)= \delta(\ell-2)\gcd(\frac{\ell-2}{\delta(\ell-2)} - 1, 3^k)=\delta(\ell-2)$. Thus  $T=OAB$ has lattice perimeter $\ell$, as required.
\end{enumerate}

This completes the proof of Theorem~\ref{T:511}.\hfill $\qed$


\section{Obtuse and right triangles, and combining hypotheses.}\label{S:obright}

Let us first consider the orthocenter.  For all $\ell\ge 3$, the obtuse lattice triangle with vertices $O,(1,0),(2-\ell,\ell-2)$ has orthocenter $(2-\ell,1-\ell)$ and lattice perimeter $\ell$. 
All right lattice triangles have their orthocenter on the integer lattice, at their right angle vertex, and there are right lattice triangles of arbitrary lattice perimeter $\ell\ge 3$; for example, one can take $O,(\ell-2,0),(0,1)$. Thus, for obtuse and right lattice triangles, the hypothesis that the orthocenter is a lattice point has no consequence for the lattice perimeter.

Now consider the circumcenter.
For all even $\ell\ge 4$,  the obtuse triangle $O,A=(2,0), B=(3-\ell,\ell-3)$ has circumcenter $(1,\ell-2)$ and lattice perimeter $\ell$. 
Similarly, for all even $\ell\ge 4$,  the right triangle $O,A=(1,1),B= (3-\ell,\ell-3)$ has circumcenter   $\frac12(4-\ell,\ell-2)$  and lattice perimeter  $\ell$.
Thus, for obtuse and right lattice triangles, the hypothesis that the circumcenter is a lattice point has no consequence for the lattice perimeter other than the requirement, imposed for all lattice triangles by Corollary ~\ref{C:cor}, that the lattice perimeter be even.

Now the centroid. First note that our proof of Theorem~\ref{T:511}, in Section~\ref{S:511}, our proof that $\ell\not\in\{5,11\}$ did not use the hypothesis that the triangle is acute! So this restriction holds for all lattice triangles whose centroid is a lattice point. Indeed, we claim that Theorem~\ref{T:511} remains valid with the word ``acute'' replaced by ``obtuse''. 
To see this we just need to exhibit the triangles in question. For the lattice perimeter $3$ case, note that the obtuse triangle $O,A(1,0),B(-1,3)$ has lattice perimeter $3$ and  centroid $(0,1)$. 
For $\ell\ge 4$ with $\ell\not\in\{5,11\}$, we constructed  an acute lattice triangle $T$ of the form $T=O,A(z,0),B(x,y)$, of  lattice perimeter $\ell$ with its centroid $G$ on the lattice. By applying a shear transformation using a matrix of the form $\begin{psmallmatrix}1&-k\\0&1\end{psmallmatrix}$, for sufficiently large $k\in\N$, one obtains an obtuse lattice triangle of  lattice perimeter $\ell$. Since linear maps preserve the property that the centroid is a lattice point, this gives an obtuse lattice triangle of lattice perimeter $\ell$ with its centroid on the lattice. 
Thus, for obtuse  lattice triangles, the hypothesis that the circumcenter is a lattice point has the same restriction on the lattice perimeter as for acute lattice triangles.

For right triangles whose centroid is a lattice point, the situation is a little different.
Suppose that $T$ is a right triangle lattice whose centroid is a lattice point. By translation, and reflection in the axes, we may assume that the right angle is at the origin. That is, we have $T=O,A(x,y),B(z,w)$, and since $AOB$ is a right angle, there exists $(u,v)\in\Z^2$, with $\gcd(u,v)=1$, and integers $i,j$ such that $(x,y)=i(u,v)$ and $(z,w)=j(-v,u)$. As the centroid $G=\frac13((x,y)+(z,w))$ of $T$ is on the integer lattice, we have $iu-jv\pmod3$ and $iv+ju\pmod3$. That is, working modular $3$,
\[
\begin{pmatrix}
i&-j\\
j&i
\end{pmatrix}\begin{pmatrix}u\\ v\end{pmatrix}\equiv \begin{pmatrix}0\\0\end{pmatrix}.
\]
Let $\Delta= \begin{psmallmatrix}
i&-j\\
j&i
\end{psmallmatrix}$. If $\det\Delta \not\equiv0\pmod3$, then $\begin{psmallmatrix}u\\v\end{psmallmatrix}\equiv \Delta^{-1}\begin{psmallmatrix}0\\0\end{psmallmatrix}\equiv \begin{psmallmatrix}0\\0\end{psmallmatrix}$, in which case $u,v$ are both multiples of $3$. If $\det\Delta \equiv0\pmod3$, then $i^2+j^2\equiv0\pmod3$, in which case $i,j$ are necessarily both multiples of $3$. So in either case, $x,y,z,w$ are all multiples of $3$. It follows that the lattice length of the sides of $T$ are all multiples of $3$. In particular, the smallest possible lattice perimeter is $9$. 
Note that the right triangle with vertices $O,(3n,0),(0,3)$ has centroid $(n,1)$ and lattice perimeter $3n+6$. So any multiple of $3$ greater than $6$ can be obtained. 

Finally, we discuss  what happens when we assume that more than one of the triangle centers $F,G,H$ lie on the lattice.
When the circumcenter $F$ is a lattice point, the orthocenter $H$ is also necessarily a lattice point, by Lemma~\ref{L:angle}. So the condition ``$F$ and $H$ are lattice points'' is the same as the condition ``$F$ is a lattice point'', and the condition ``$F,G,H$ are lattice points'' is the same as the condition ``$F$ and $G$ are lattice points''. Thus, the only combinations that need be considered are: (a) $G$ and $H$ are lattice points, and (b)  $F,G,H$ are lattice points.

Case (a). We claim that if $\ell\in\N$, then there exists an acute lattice triangle of lattice perimeter $\ell$, for which the centroid $G$ and orthocenter $H$ are both lattice points, if and only if $\ell$ is a multiple of $3$ with $\ell\ge 9$. Moreover, we claim that this statement remains true if the word ``acute'' is replaced by ``obtuse'', or by ``right''. We start with a general observation.

\begin{lemma}\label{L:orthocentroid3s} Suppose that $T$ is a lattice triangle with its centroid $G$ and orthocenter $H$ on the lattice. 
Then  the lattice length of each side  of $T$ is  a multiple of $3$. In particular, the lattice perimeter $\ell$ of $T$ is  a multiple of $3$ and $\ell\ge 9$.
\end{lemma}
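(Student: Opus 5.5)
We will prove directly that, once $G$ and $H$ are lattice points, every vertex difference is divisible by $3$. The last claim then follows at once: each side has lattice length a multiple of $3$, hence at least $3$, so $\ell$ is a multiple of $3$ and $\ell\ge 9$.

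After a translation, take the vertices to be $O$, $A=(x_1,y_1)$, $B=(x_2,y_2)$. By \eqref{E:G}, $G\in\Z^2$ means $A+B\equiv 0\pmod 3$ componentwise, that is, $B\equiv -A \pmod 3$. The plan is to reduce the quantities in \eqref{E:H} and in Lemma~\ref{L:theta} modulo $3$.

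First, the determinant. We have
\[
D:=x_1y_2-x_2y_1\equiv -x_1y_1+x_1y_1\equiv 0 \pmod 3 ,
\]
so $3\mid D$, and $D\neq 0$ because the triangle is nondegenerate.

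Next, the dot product satisfies $A\cdot B\equiv -(x_1^2+y_1^2)\pmod 3$. Also $A-B\equiv 2A\equiv -A\pmod 3$.

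Now suppose, for contradiction, that $A\not\equiv 0\pmod 3$. Since $-1$ is not a square modulo $3$, the sum $x_1^2+y_1^2$ is divisible by $3$ only when both $x_1,y_1$ are. Hence $3\nmid A\cdot B$. Likewise $A-B\not\equiv 0\pmod 3$, so the lattice length $\ell_O=\gcd(A-B)$ of the side $AB$ is not divisible by $3$.

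The key input is Lemma~\ref{L:theta}, applied at the vertex $O$. Since $\cot\theta=\frac{A\cdot B}{D}$, as in its proof, the lattice length of $OH$ is
\[
m=\ell_O\cot\theta=\frac{\ell_O\,(A\cdot B)}{D}\in\N .
\]
Equivalently, from \eqref{E:H} with $A-B=\ell_O w$ for a primitive vector $w$, we get $H=\frac{\ell_O(A\cdot B)}{D}\,Rw$, and this lies in $\Z^2$ exactly when that coefficient is an integer. But $3$ divides the denominator $D$ and does not divide the numerator $\ell_O(A\cdot B)$, which is a contradiction.

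Therefore $A\equiv 0\pmod 3$, and then $B\equiv -A\equiv 0\pmod 3$. Consequently $\gcd(A)$, $\gcd(B)$ and $\gcd(A-B)$ are all multiples of $3$. Lemma~\ref{L:centroid3s} also gives this last step once a single side is known to be a multiple of $3$.

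The main obstacle is the step that rules out $3\mid A\cdot B$. It depends on the observation that $x^2+y^2\equiv 0\pmod 3$ forces $x\equiv y\equiv 0$. Everything else is routine reduction modulo $3$.
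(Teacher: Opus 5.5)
Your proof is correct and follows essentially the same route as the paper: reduce modulo $3$ using $B\equiv -A$ to get $3\mid D$, then use integrality of $H=\frac{A\cdot B}{D}R(A-B)$ to force $3$ to divide either $A\cdot B$ or both coordinates of $A-B$, each of which gives $A\equiv B\equiv 0\pmod 3$. Your version runs this dichotomy as a contradiction via the primitive-vector factorization, and it spells out the fact that $x^2+y^2\equiv 0\pmod 3$ forces $x\equiv y\equiv 0\pmod 3$, which the paper leaves implicit.
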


\begin{proof} 
Suppose, without loss of generality,  that $T$ has vertices $0,A=(x_1,y_1),B=(x_2,y_2)$. The orthocenter $H$ of $T$ is
\[
H=\frac{x_1x_2+y_1y_2}{x_1y_2-x_2y_1}( y_2-y_1, x_1-x_2).
\]
Since $G=\frac13(x_1+x_2,y_1+y_2)\in\Z^2$, we have $x_2\equiv -x_1\pmod 3$ and $y_2\equiv -y_1\pmod 3$.
Thus $x_1y_2-x_2y_1\equiv 0\pmod 3$. Thus, since $H\in\Z^2$, either $x_1x_2+y_1y_2$ is a multiple of $3$, or $y_2-y_1$ and $x_1-x_2$ are both multiples of $3$. It follows that, since $x_2\equiv -x_1\pmod 3$ and $y_2\equiv -y_1\pmod 3$, the numbers  $x_1,x_2,y_1,y_2$ are all multiples of $3$. Hence, 
 the lattice length of each side  of $T$ is  a multiple of $3$. 
\end{proof}

Note that if the orthocenter of a lattice triangle $T$ is a lattice point, then the centroid  and orthocenter of the triangle $3T$ are both lattice points. Hence, from the discussion in the first paragraph of this section, by multiplying by $3$, there exist obtuse and right lattice triangles, with $G,H\in\Z^2$, of every lattice perimeter $\ell$ that is a multiple of $3$ with $\ell\ge 9$.
Furthermore, by Theorem~\ref{T:3457}, there exist acute lattice triangles, with $H\in\Z^2$, of every lattice perimeter $\ell\not\in\{3,4,5,7\}$, and hence by multiplying by $3$ we obtain acute lattice triangles, with $H\in\Z^2$, of every lattice perimeter $\ell\in3\N$ with $\ell\ge 9$, except possibly for $\ell\in\{9,12,15,21\}$. For these remaining four cases, here are explicit acute triangles:
\begin{itemize}
\item {$\ell=9:$}  the triangle $O,(6,3),(3,6)$ has centroid  $(3,3)$ and orthocenter $(4,4)$.
\item {$\ell=12:$} the triangle $O,(6,0),(3,9)$ has centroid  $(3,3)$ and orthocenter $(3,1)$.
\item {$\ell=15:$}  the triangle $O,(9,0),(3,9)$ has centroid  $(4,3)$ and orthocenter $(3,2)$.
\item {$\ell=21:$}  the  triangle $O,(15,0),(3,9)$ has centroid  $(6,3)$ and orthocenter $(3,4)$.
\end{itemize}

Case (b). We claim that if $\ell\in\N$, then there exists an acute lattice triangle of lattice perimeter $\ell$, for which $F,G,H\in\Z^2$, if and only if $\ell$ is a multiple of $6$ and $\ell\ge 12$. Moreover, we claim that this statement remains true if the word ``acute'' is replaced by ``obtuse'', or by ``right''. 

If a lattice triangle has lattice perimeter $\ell$, and $F,G,H\in\Z^2$, then $\ell$ is even by Corollary~\ref{C:cor}, and  $\ell$ is a multiple of $3$ and $\ell\ge 9$, by Lemma~\ref{L:orthocentroid3s}; hence $\ell$ is necessarily a multiple of $6$ and $\ell\ge 12$. It remains to see that such triangles exist in each of the three cases: obtuse, right and acute.
From the discussion in the second paragraph of this section, there exist obtuse and right lattice triangles, with $F\in\Z^2$, of all even lattice perimeters  $\ell$  with $\ell\ge 4$. 
It follows that, by multiplying by $3$, there exist obtuse and right lattice triangles, with $F,G,H\in\Z^2$, for all that is a multiple of $6$ with $\ell\ge 12$.
By Theorem \ref{T:4610}, there exist acute lattice triangles, with $F\in\Z^2$,  for all even lattice perimeters $\ell$ with $\ell\not\in\{4,6,10\}$. 
Multiplying by $3$, we obtain the desired acute triangles, except possibly for lattice perimeters $\ell\in\{12,18,30\}$.
For these remaining three cases, here are explicit acute triangles:
\begin{itemize}
\item {$\ell=12:$} the triangle $O,(6,0),(3,9)$ has  $F=(3,4),G=(3,3),H=(3,1)$.
\item {$\ell=18:$}  the triangle $O,(12,6),(6,12)$ has  $F=(5,5),G=(6,6),H=(8,8)$.
\item {$\ell=30:$} the triangle $O,(18,0),(6,18)$ has  $F=(9,7),G=(8,6),H=(6,4)$.
\end{itemize}

To conclude, the results of Theorems \ref{T:3457}, \ref{T:4610}, \ref{T:511} are summarized in Table~\ref{Tab:results}, which gives the values of the lattice perimeters $\ell$ that are achievable in the different cases.

\begin{table}
\begin{tabular}{c||l|l|l}
  \hline
  Points in $\Z^2$ & Acute&Obtuse&Right \\ \hline
$F$ &$2\N\backslash\{2,4,6,10\}$&$2\N\backslash\{2\}$&$2\N\backslash\{2\}$\\
 $G$&$\N\backslash\{1,2,5,11\}$&$\N\backslash\{1,2,5,11\}$&$3\N$\\
  $H$&$\N\backslash\{1,2,3,4,5,7\}$&$\N\backslash\{1,2\}$&$\N\backslash\{1,2\}$ \\
   $G,H$&$3\N\backslash\{3,6\}$&$3\N\backslash\{3,6\}$&$3\N\backslash\{3,6\}$\\
    $F,G,H$&$6\N\backslash\{6\}$&$6\N\backslash\{6\}$&$6\N\backslash\{6\}$\\
\hline
\end{tabular}
\vskip.1cm
\caption{Achievable lattice perimeters}\label{Tab:results}
\end{table}

\section{An open problem.}\label{S:open}

\begin{figure}
\begin{center}
\begin{tikzpicture}[>=latex]
  \draw[semithick,fill,blue!20] (0,0) coordinate(A) -- (4,0) coordinate(B) -- (4,3) coordinate(C) -- cycle; 
  \draw[semithick] (0,0) coordinate(A) -- (4,0) coordinate(B) -- (4,3) coordinate(C) -- cycle; 
\filldraw[black] (0,0) circle (2pt) node[anchor=north east]{$O$}; 
 \filldraw[black] (4,0)  circle (2pt) node[anchor=north west]{$A=(4,0)$};
\filldraw[black]  (4,3) circle (2pt) node[anchor=north west]{$B=(4,3)$};
\filldraw[black] (3,1) circle (2pt) node[anchor=north]{$I=(3,1)$}; 
\draw[blue,thick]  (3,1) circle (1);

\foreach \ii in {-1,...,3}
\draw[-,dotted,semithick] (-1,\ii) -- (5,\ii);
\foreach \jj in {-1,...,5}
\draw[-,dotted,semithick] (\jj,-1) -- (\jj,3);

\draw[->,semithick] (-1,0) -- (5,0);
\draw[->,semithick] (0,-1) -- (0,3);

\filldraw[red] (2.4,1.8) circle (2pt); 
\filldraw[red] (4,1) circle (2pt); 
\filldraw[red] (3,0) circle (2pt); 
\draw[black] (2.4,1.8) circle (2pt); 
\draw[black] (4,1) circle (2pt); 
\draw[black] (3,0) circle (2pt); 

\end{tikzpicture}
\caption{The incenter of the $3,4,5$ right triangle.}\label{F:inc}
\end{center}
\end{figure}
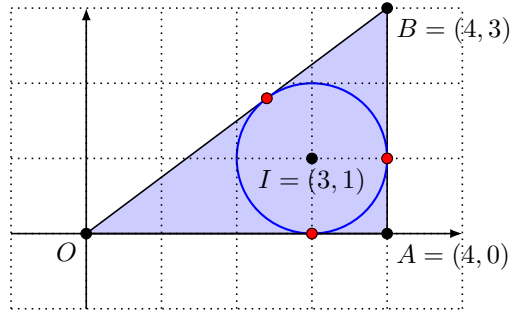

We have examined the consequences of the circumcenter, centroid and orthocenter being a lattice point. An obvious further case to study is that of the \emph{incenter}; see Figure~\ref{F:inc}.
For a lattice triangle $T$ with vertices $O,A=(x_1,y_1),B=(x_2,y_2)$ and sides $a=|A|,b=|B|,c=|A-B|$, the incenter of $T$ is given by the following formula.
\begin{equation}\label{equ:I}
I=\frac{aB+bA}{a+b+c}.
\end{equation}
A cursory examination of examples indicates that for the incenter there should be results analogous to those of Theorems~\ref{T:3457}, \ref{T:4610}, \ref{T:511}, but with different numbers. The authors of this paper have not undertaken a study of this problem, and as far as we know, it has not been treated in the literature. We believe it might make an interesting project. It entails a nice programing exercise to generate examples and thus formulate conjectures. Then there is the task of devising series of explicit examples showing that certain lattice perimeters are possible with the incenter on the lattice, and finally the more difficult task of developing proofs that exclude the remaining cases.
Not having undertaken this project ourselves, we give no guarantee that the difficulty is not greater than what we experienced in the proof of the results presented in this paper.
In fact, we suspect it might be more difficult. Here is an example of the sort of situation that can arise.

\begin{example} In Figure~\ref{F:inc}, the three points at which the incircle touches the triangle are not all lattice points; only two of them are. Consider the lattice triangle $T$ with vertices 
$O,(14,2),(8,8)$. Its incenter is the lattice point $(8,4)$, and not only are the three points at which the incircle touches $T$ not all lattice points, but the inradius of $T$ is irrational; it is $2\sqrt2$; see Figure~\ref{F:inc2}. Not surprisingly, there are larger lattice triangles which have their incenter on the lattice, an irrational inradius, and none of the points at which the incircle touches $T$ are lattice points. For example, consider the triangle with vertices $O,(14,2),(21,51)$.
 It has incenter $(9,7)$, inradius $4 \sqrt2$ and the points where the  incircle touches the triangles are  $\frac15(49,7),\frac15(73,31),\frac1{13}(49,119)$.

\begin{figure}
\begin{center}
\begin{tikzpicture}[>=latex]
  \draw[semithick,fill,blue!20] (0,0) coordinate(A) -- (4,4) coordinate(B) -- (7,1) coordinate(C) -- cycle; 
  \draw[semithick] (0,0) coordinate(A) -- (4,4) coordinate(B) -- (7,1) coordinate(C) -- cycle; 
\filldraw[black] (0,0) circle (2pt) node[anchor=north east]{$O$}; 
 \filldraw[black] (4,4)  circle (2pt) node[anchor=west]{$(8,8)$};
\filldraw[black]  (7,1) circle (2pt) node[anchor=west]{$(14,2)$};
\filldraw[black] (4,2) circle (2pt) node[anchor=north]{$(8,4)$}; 
\draw[blue,thick]  (4,2) circle (1.416);

\foreach \ii in {-1,...,8}
\draw[-,dotted,semithick] (-1/2,\ii/2) -- (7,\ii/2);
\foreach \jj in {-1,...,14}
\draw[-,dotted,semithick] (\jj/2,-1/2) -- (\jj/2,4);

\draw[->,semithick] (-1/2,0) -- (7,0);
\draw[->,semithick] (0,-1/2) -- (0,4);

\filldraw[red] (4.2,.6) circle (2pt); 
\filldraw[red] (3,3) circle (2pt); 
\filldraw[red] (5,3) circle (2pt); 
\draw[black] (4.2,.6) circle (2pt); 
\draw[black] (3,3) circle (2pt); 
\draw[black] (5,3) circle (2pt); 

\end{tikzpicture}
\caption{A lattice triangle with its incenter on $\Z^2$ but irrational inradius.}\label{F:inc2}
\end{center}
\end{figure}
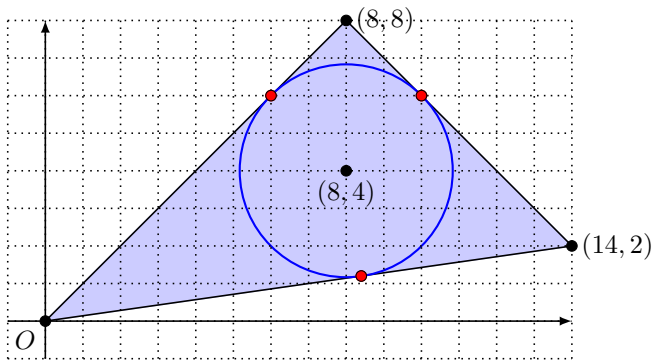

\end{example}

\bigskip
\noindent
{\bf Author contributions statement.} 
The two authors were both authors in the traditional sense of authorship of a mathematics paper. Both authors agree to be accountable for all aspects of the work.

\bigskip
\noindent
{\bf Competing interests statement.} 
There are no interests to declare.

\bigskip
\noindent
{\bf Declaration of funding statement.} 
No funding was received.



\begin{thebibliography}{1}

\bibitem{Rab}
Rabinowitz S. 
A census of convex lattice polygons with at most one interior lattice point.
Ars Combin. 1989;28:{83--96}.

\bibitem{HSk}
{Hille L, Skarke H.}
{Reflexive polytopes in dimension 2 and certain relations in
              {${\rm SL}_2(\mathbb Z)$}}.
{J. Algebra Appl.}
 2002;1:{159--173}.


\bibitem{WeiDing}
{Wei X, Ding R.}
{Lattice polygons with two interior lattice points},
{Translation of Mat. Zametki {{\bf{9}}1} (2012), no. 6,
              920--933}.
{Math. Notes}
  2012;91:{868--877}.
  
\bibitem{Worm}
{Wormleighton B.}
{Algebraic capacities}.
{Selecta Math. (N.S.)}
2022;28:{Paper No. 9, 45}.

\bibitem{HaaseNil}
{Haase C, Nill B, Paffenholz A.}
{Lecture Notes on Lattice Polytopes}.
{\url{https://www2.mathematik.tu-darmstadt.de/~paffenholz/daten/preprints/20210628_Lattice_Polytopes.pdf}}.
{Technische Universit{\"a}t Berlin}; 2021.

\bibitem{Scott}
{Scott PR.}
{The fascination of the elementary}.
{Amer. Math. Monthly}
1987; {94}:{759--768}.

\bibitem{HaaSc}
{Haase C, Schicho J.}
{Lattice polygons and the number {$2i+7$}}.
{Amer. Math. Monthly}
 2009;116:{151--165}.

\bibitem{Abrams}
{Abrams A, Pommersheim J}.
{Integer area dissections of lattice polygons via a non-abelian {S}perner's {L}emma}.
{Amer. Math. Monthly}
2025;132:{737--753}.

\bibitem{PoRV}
{Poonen B, Rodriguez-Villegas F}.
{Lattice polygons and the number 12}.
{Amer. Math. Monthly}
 2000; {107}:{238--250}.
 
\bibitem{AN}
Alsina C, Nelsen RB.
Icons of mathematics, an exploration of twenty key images.  
Dolciani Mathematical Expositions vol.~45.
Washington DC: Mathematical Association of America; 2011.

\bibitem{AN2}
Alsina C, Nelsen RB.
Charming proofs, a journey into elegant mathematics. 
Dolciani Mathematical Expositions vol.~42.
Washington DC: Mathematical Association of America; 2010.





\end{thebibliography}
\end{document}